\newcommand{\roundb}[1]{\left(#1\right)} 
\newcommand{\Hproduct}[1]{%
  \roundb{\begin{smallmatrix}#1\end{smallmatrix}}
} %
\renewcommand{\d}{\, \mathrm{d}}
\newcommand{\init}{\vert_{t = 0}}
\newcommand{\Abs}[1]{\left\vert #1 \right\vert}
\newcommand{\abs}[1]{\vert #1 \vert}
\newcommand{\bigabs}[1]{\bigl\vert #1 \bigr\vert}
\newcommand{\norm}[1]{\left\Vert #1 \right\Vert}
\newcommand{\bignorm}[1]{\bigl\Vert #1 \bigr\Vert}
\newcommand{\C}{\mathbb{C}}
\newcommand{\R}{\mathbb{R}}
\newcommand{\angles}[1]{\langle #1 \rangle}
\DeclareMathOperator{\diag}{diag}
\DeclareMathOperator{\im}{Im}
\DeclareMathOperator{\re}{Re}
\newtheorem{theorem}{Theorem}[section]
\newtheorem{lemma}[theorem]{Lemma}
\theoremstyle{definition}
\newtheorem{definition}[theorem]{Definition}
\theoremstyle{remark}
\newtheorem{remark}[theorem]{Remark}
\numberwithin{equation}{section}
\title[Finite energy M-K-G in Lorenz gauge]{Finite-energy global well-posedness of the Maxwell-Klein-Gordon system in Lorenz gauge}
\author{Sigmund Selberg}
\address{Department of Mathematical Sciences\\ Norwegian University of Science and Technology\\ N-7491 Trondheim\\ Norway}
\email{sselberg@math.ntnu.no}
\urladdr{www.math.ntnu.no/~sselberg}
\thanks{The first author acknowledges partial support by the Research Council of Norway, grants 160192 and 185359. Second author supported by the Research Council of Norway, grant 160192.}
\author{Achenef Tesfahun}
\address{Department of Mathematical Sciences\\ Norwegian University of Science and Technology\\ N-7491 Trondheim\\ Norway}
\subjclass[2000]{35Q40; 35L70}
\begin{document}

\begin{abstract} 
It is known that the Maxwell-Klein-Gordon system (M-K-G), when written relative to the Coulomb gauge, is globally well-posed for finite-energy initial data. This result, due to Klainerman and Machedon, relies crucially on the null structure of the main bilinear terms of M-K-G in Coulomb gauge. It appears to have been believed that such a structure is not present in Lorenz gauge, but we prove here that it is, and we use this fact to prove finite-energy global well-posedness in Lorenz gauge. The latter has the advantage, compared to Coulomb gauge, of being Lorentz invariant, hence M-K-G in Lorenz gauge is a system of nonlinear wave equations, whereas in Coulomb gauge the system has a less symmetric form, as it contains also a nonlinear elliptic equation.
\end{abstract}

\maketitle


\section{Introduction}\label{A}

Points in Minkowski space $\R^{1+3}$ are written $(x^0,x^1,x^2,x^3)$, and $\partial_\mu$ denotes the partial derivative with respect to $x^\mu$. Often we split the coordinates into the time variable $t=x^0$ and the space variable $x=(x^1,x^2,x^3)$, and we write $\partial_t=\partial_0$ and $\nabla = (\partial_1,\partial_2,\partial_3)$. Roman indices $j,k,\dots$ run over $1,2,3$, greek indices $\mu,\nu,\dots$ over $0,1,2,3$, and repeated upper/lower indices are implicitly summed over these ranges. Indices are raised and lowered using the Minkowski metric $\diag(-1,1,1,1)$. 

The Maxwell-Klein-Gordon system (M-K-G) describes the motion of a spin-0 particle self-interacting with an electromagnetic field. It is obtained by coupling Maxwell's equation for the electric and magnetic fields $\mathbf E, \mathbf B \colon \R^{1+3} \to \R^3$ with the Klein-Gordon equation for a scalar field $\phi \colon \R^{1+3} \to \C$, and reads
\begin{gather}
  \label{A:2}
  \nabla \cdot \mathbf E = \rho,
  \qquad
  \nabla \cdot \mathbf B = 0,
  \qquad
  \nabla \times \mathbf E + \partial_t \mathbf B = 0,
  \qquad
  \nabla \times \mathbf B - \partial_t \mathbf E = \mathbf J,
  \\
  \label{A:4}
  D^{(A)}_\mu D^{(A)\mu} \phi = m^2\phi,
\end{gather}
where $m > 0$ is a constant and
\begin{equation}\label{A:5}
  D_\mu^{(A)} = \partial_\mu - iA_\mu
\end{equation}
is the gauge covariant derivative corresponding to a real-valued 4-potential $A = \{A_\mu \}_{\mu = 0,1,2,3}$ representing the electromagnetic field:
\begin{equation}\label{A:6}
  \mathbf B = \nabla \times \mathbf A,
  \qquad
  \mathbf E = \nabla A_0 - \partial_t \mathbf A,
\end{equation}
where $\mathbf A = (A_1,A_2,A_3)$ is the spatial part of $A$. To complete the coupling, we specify the charge density $\rho$ and the current density $\mathbf J$ in \eqref{A:2}, namely
\begin{equation}\label{A:8}
\begin{aligned}
  \rho
  &=
  - \im \left( \phi \overline{D_0^{(A)} \phi} \, \right)
  = - \im \left( \phi \overline{\partial_t \phi} \, \right) - \abs{\phi}^2 A_0,
  \\
  \mathbf J
  &=
  \im \left( \phi \overline{\nabla^{(A)} \phi} \, \right)
  =
  \im \left( \phi \overline{\nabla \phi} \, \right) + \abs{\phi}^2 \mathbf A,
\end{aligned}
\end{equation}
where $\nabla^{(A)} = (D_1^{(A)},D_2^{(A)},D_3^{(A)})$. This choice of densities is dictated by the natural 4-current density associated to the Klein-Gordon equation:
$$
  J_\mu = \im \left( \phi \overline{D_\mu^{(A)} \phi} \, \right).
$$
Note that $\rho = J^0 = - J_0$ and $\mathbf J = (J^1,J^2,J^3) = (J_1,J_2,J_3)$.

Equations \eqref{A:2} and \eqref{A:4}, coupled by \eqref{A:5}--\eqref{A:8}, constitute the M-K-G system.

The total energy of a solution, at time $t$, is
$$
  \mathcal E(t)
  = \frac12
  \int_{\R^3}
  \left(
  \bigabs{D^{(A)} \phi(t,x)}^2 + m^2\abs{\phi(t,x)}^2
  + \Abs{\mathbf E(t,x)}^2
  + \Abs{\mathbf B(t,x)}^2 \right) \d x.
$$
For a smooth solution decaying sufficiently fast at spatial infinity, the energy is a conserved quantity.

Formally, the second and third equations in \eqref{A:2} are equivalent to the existence of a potential $A$ satisfying \eqref{A:6}, but this potential is not unique. In fact, M-K-G is invariant under the \emph{gauge transformation}
\begin{equation}\label{A:90}
  \phi \longrightarrow \phi' = e^{i\chi} \phi,
  \qquad A_\mu \longrightarrow A_\mu' = A_\mu + \partial_\mu \chi,
\end{equation}
for any sufficiently smooth $\chi : \R^{1+3} \to \R$. That is, if $(\phi,A)$ satisfies M-K-G, then so does $(\phi',A')$, as can be seen from the identity $D_\mu^{(A')}\phi' = e^{i\chi}D_\mu^{(A)}\phi$.

Since the observables $\mathbf E$, $\mathbf B$, $\rho$ and $\mathbf J$ are not affected by a gauge transformation, two solutions related by such a transformation are physically undistinguishable, and must be considered equivalent. In practice, a solution is therefore a representative of its equivalence class, hence we have \emph{gauge freedom}: We are free to choose a representative that suits our needs.

Note that if we express also the first and fourth equations in \eqref{A:2} in terms of the potential $A$, then \eqref{A:2} is replaced by
\begin{equation}\label{A:100}
  \square A_\mu - \partial_\mu ( \partial^\nu A_\nu ) = - J_\mu
  \qquad \left( \square = \partial_\mu \partial^\mu = -\partial_t^2 + \Delta \right).
\end{equation}
In view of the gauge freedom, we can impose an additional \emph{gauge condition} on $A$, which simplifies the analysis as much as possible. Looking at \eqref{A:100}, an obvious choice is the \emph{Lorenz gauge condition}, $\partial^\mu A_\mu = 0$, or equivalently, $\partial_t A_0 = \nabla \cdot \mathbf A$. Note that this does not uniquely determine $A$: The condition $\partial^\mu A_\mu = 0$ is preserved by the gauge transformation \eqref{A:90} for any $\chi$ satisfying $\square \chi = 0$.

In Lorenz gauge, the M-K-G system becomes
\begin{equation}\label{A:104}
\left\{
\begin{aligned}
  &D^{(A)}_\mu D^{(A)\mu} \phi = m^2\phi,
  \\
  &\square A = - \im \left( \phi \overline{D^{(A)} \phi} \, \right) ,
  \\
  &\partial^\mu A_\mu = 0.
\end{aligned}
\right.
\end{equation}

Another popular choice of gauge is the \emph{Coulomb gauge condition}, $\nabla \cdot \mathbf A = 0$, or equivalently $\mathbf P \mathbf A = \mathbf A$, where $\mathbf P$ denotes the projection onto the divergence free vector fields on $\R^3$. Then \eqref{A:100} splits into a nonlinear elliptic equation $\Delta A_0 = \rho$ and the nonlinear wave equation $\square \mathbf A = - \mathbf P \mathbf J$. The Coulomb gauge was used by Klainerman and Machedon in \cite{Klainerman:1994b}, where the global well-posedness of M-K-G for finite-energy data was proved (thus they recovered, in particular, the earlier global regularity result from \cite{Eardley:1982}). The key observation made in \cite{Klainerman:1994b} was that in Coulomb gauge the main bilinear terms in M-K-G have a so-called null structure, which cancels the worst interactions in a product of two waves. Without this structure, there would be no hope of proving local well-posedness of M-K-G in the energy class. Since the seminal work \cite{Klainerman:1994b}, it appears to have been widely believed that Coulomb gauge is distinguished with respect to the presence of null structure. But as we show in this paper, the structure is there also in Lorenz gauge. The first instance of null structure in Lorenz gauge for a nonlinear field theory was found for the Maxwell-Dirac system by D'Ancona, Foschi and the first author in \cite{Selberg:2008b}, which inspired the present work.

The main advantage of Lorenz gauge is its Lorentz invariance, resulting in a more symmetric form of M-K-G than in Coulomb gauge, where one has to deal with the nonlinear elliptic equation for $A_0$, and the nonlocality of the system.

On the other hand, one may argue that Coulomb gauge has the advantage that the potential $\mathbf A$ is uniquely determined by $\mathbf B$, and $\mathbf A$ gains one degree of Sobolev regularity compared to $\mathbf B$, whereas in Lorenz gauge the potential is not uniquely determined and appears to lose regularity compared to its initial data. This is not a problem, however, since the regularity of $A$ as such is not an issue: Only the observables $\mathbf E, \mathbf B$ represented by $A$ are of interest, and these do not lose uniqueness or regularity, as we show here.

This paper is organized as follows: In the next section we pose the correct initial value problem for M-K-G in Lorenz gauge, with finite-energy data, and we state our main theorem, which is that this problem is globally well-posed. This is then the analogue, in Lorenz gauge, of the result obtained in Coulomb gauge in \cite{Klainerman:1994b}. In section \ref{N} we demonstrate the null structure in Lorenz gauge, and compare it to the structure found in Coulomb gauge in \cite{Klainerman:1994b}. In sections \ref{L} and \ref{LL} we use this structure to prove local existence in the energy class, the main technical tool being product estimates in wave-Sobolev spaces. Using the conservation of energy we show in section \ref{G} that the local result extends to a global one, and finally in section \ref{U} we prove uniqueness of the solution.

Some notation: $H^s$ (for any $s \in \R$) and $\dot H^s$ (for $\abs{s} < \frac32$) are the completions of the Schwartz space $\mathcal S(\R^3)$ with respect to the norms $\norm{f}_{H^s} = \bignorm{\angles{\xi}^s \widehat f\,}_{L^2}$ and $\norm{f}_{\dot H^s} = \bignorm{\abs{\xi}^s \widehat f\,}_{L^2}$, respectively, where $\widehat f(\xi) = \mathcal F f(\xi)$ is the Fourier transform of $f(x)$ and we use the shorthand $\angles{\xi} = (1+\abs{\xi}^2)^{\frac12}$. We shall make frequent use of the embedding $\dot H^1 \hookrightarrow L^6$, where $\hookrightarrow$ denotes continuous inclusion. The space $\abs{\nabla}^{-1} H^s$ is defined by
$$
  \abs{\nabla}^{-1} H^s
  = \mathcal F^{-1} \left\{ \frac{\widehat g(\xi)}{\abs{\xi}\angles{\xi}^s} \colon g \in L^2(\R^3) \right\}
  = \mathcal F^{-1} L^2 \left( \abs{\xi}^2\angles{\xi}^{2s} \, d\xi \right)
$$
with norm $\bignorm{\abs{\xi}\angles{\xi}^s\widehat f\,}_{L^2}$. Equivalently, $\abs{\nabla}^{-1} H^s$ is the completion of $\mathcal S(\R^3)$ with respect to this norm. Note that $\dot H^1 = \abs{\nabla}^{-1} H^0 = \abs{\nabla}^{-1} L^2$. By splitting into low and high frequencies ($\abs{\xi} \le 1$ and $\abs{\xi} > 1$), and using the embedding $\dot H^1 \hookrightarrow L^6$, we get $\abs{\nabla}^{-1} H^s \hookrightarrow L^6 + H^{s+1}$.

In estimates we use the shorthand $X \lesssim Y$ for $X \le CY$, where $C \gg 1$ is a constant which may depend on quantities which are considered fixed, such as the exponents of Sobolev norms involved in the estimate. Further, $X=O(R)$ is short for $\abs{X} \lesssim R$, $X \sim Y$ means $X \lesssim Y \lesssim X$, and $X \ll Y$ stands for $X \le C^{-1} Y$, with $C$ as above. We write $\simeq$ for equality up to multiplication by an absolute constant (typically factors involving $2\pi$, in connection with the Fourier transform).

\section{The main result}\label{M}

We are interested in the Cauchy problem starting from data
\begin{equation}\label{M:100}
  \phi\init = \phi_0,
  \qquad
  D^{(A\init)}\phi\init = U,
  \qquad
  \mathbf E\init = \mathbf E_0,
  \qquad
  \mathbf B\init = \mathbf B_0,
\end{equation}
such that the initial energy,
\begin{equation}\label{M:120}
  \mathcal E(0) =
  \frac12
  \int_{\R^3}
  \left(
  \abs{U(x)}^2 +  m^2\abs{\phi_0(x)}^2
  + \Abs{\mathbf E_0(x)}^2
  + \Abs{\mathbf B_0(x)}^2 \right) \, dx,
\end{equation}
is finite. In view of \eqref{A:2} and \eqref{A:8}, we must assume
\begin{equation}\label{M:130}
  \nabla \cdot \mathbf E_0
  = \rho_0 \equiv - \im \left( \phi_0 \overline{ U_0 } \right),
  \qquad
  \nabla \cdot \mathbf B_0 =0,
\end{equation}
where $U_0$ is the first component of $U = (U_0,U_1,U_2,U_3) = (U_0,\mathbf U)$.
 
Even with the Lorenz condition imposed, there is still some gauge freedom: The initial covariant derivative in \eqref{M:100} depends on $A \init$, which is not uniquely determined. There is, however, a natural choice of $A\init$ which allows one to control it by the energy, as we now discuss.

Writing $(A,\partial_t A)\init =(a,\dot a)$, and denoting the spatial part by $(\mathbf a,\dot{\mathbf a})$, we get from the Lorenz condition and Maxwell's equations the constraints
\begin{equation}\label{M:140}
  \dot a_0 = \nabla \cdot \mathbf a.
  \qquad
  \mathbf B_0 = \nabla \times \mathbf a,
  \qquad
  \mathbf E_0 = \nabla a_0 - \dot{\mathbf a}.
\end{equation}
Set $a_0 = \dot a_0 = 0$. Then \eqref{M:140} uniquely determines $\mathbf a \in \dot H^1$ and $\dot{\mathbf a} \in L^2$, and
\begin{equation}\label{M:160}
  \norm{\mathbf a}_{\dot H^1} \le C \norm{\mathbf B_0}_{L^2},
  \qquad
  \norm{\dot{\mathbf a}}_{L^2} = \norm{\mathbf E_0}_{L^2},
\end{equation}
assuming $\mathcal E(0) < \infty$.

\begin{remark}\label{A:Rem1} Our choice of initial gauge is justified by gauge freedom: Suppose $(\phi,A)$ is a solution of \eqref{A:104}. Let $\chi$ be the solution of
\begin{equation}\label{M:170}
  \square \chi = 0,
  \qquad
  \Delta \chi(0) = - \nabla \cdot \mathbf a,
  \qquad
  \partial_t \chi(0) = - a_0,
\end{equation}
and apply the gauge transformation \eqref{A:90}. Since $\square \chi = 0$, the Lorenz condition is preserved, and by the choice of data for $\chi$ we have $a_0' = 0$ and $\nabla \cdot \mathbf a' = 0$; then in view of the Lorenz condition, we further have $\dot{a}'_0 = 0$.
\end{remark}

\begin{remark}\label{A:Rem2} 
Although $\nabla \cdot \mathbf a = 0$, our initial gauge is \emph{not} Coulomb. Indeed, the latter requires a special choice of $a_0$ obtained by solving a certain elliptic equation, and is not compatible with our choice $a_0=0$.
\end{remark}

By our choice of initial gauge, we get the constraint
\begin{equation}\label{M:180}
  U = (U_0,\mathbf U) = \left( \phi_1, \nabla \phi_0 - i \phi_0 \mathbf a \right),
\end{equation}
where we write $\phi_1 = \partial_t \phi\init$. Then the assumption $\mathcal E(0) < \infty$ implies $\phi_0 \in H^1$, as the following lemma shows.

\begin{lemma}\label{M:Lemma} Suppose we are given $\phi_0 \in L^2(\R^3;\C)$ and $\mathbf a \in \dot H^1(\R^3;\R^3)$. Define
$$
  \mathbf U = \nabla \phi_0 - i \phi_0 \mathbf a,
$$
and assume that $\mathbf U \in L^2$. Then $\phi_0 \in H^1$, and
\begin{equation}\label{M:190}
  \norm{\nabla \phi_0}_{L^2} \le 2 \norm{\mathbf U}_{L^2}
  + C \norm{\mathbf a}_{\dot H^1}^2\norm{\phi_0}_{L^2},
\end{equation}
where $C$ is an absolute constant.
\end{lemma}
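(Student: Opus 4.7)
The plan is to start from $\nabla \phi_0 = \mathbf U + i\phi_0 \mathbf a$ and bound $\norm{\phi_0 \mathbf a}_{L^2}$ by a quantity that can be absorbed into $\norm{\nabla \phi_0}_{L^2}$ via Young's inequality. Granting for the moment that $\phi_0 \in H^1$ (the general case will follow by approximation), H\"older yields $\norm{\phi_0\mathbf a}_{L^2} \le \norm{\phi_0}_{L^3}\norm{\mathbf a}_{L^6}$, and Sobolev gives $\norm{\mathbf a}_{L^6} \lesssim \norm{\mathbf a}_{\dot H^1}$. Combining the $L^2$-$L^6$ interpolation $\norm{\phi_0}_{L^3} \le \norm{\phi_0}_{L^2}^{1/2}\norm{\phi_0}_{L^6}^{1/2}$ with the embedding $\norm{\phi_0}_{L^6} \lesssim \norm{\nabla \phi_0}_{L^2}$ produces
$$
\norm{\phi_0\mathbf a}_{L^2} \le C\norm{\mathbf a}_{\dot H^1}\norm{\phi_0}_{L^2}^{1/2}\norm{\nabla \phi_0}_{L^2}^{1/2}.
$$
Applying Young's inequality $ab \le \tfrac12 a^2 + \tfrac12 b^2$ with $a=\norm{\nabla \phi_0}_{L^2}^{1/2}$ and $b = C\norm{\mathbf a}_{\dot H^1}\norm{\phi_0}_{L^2}^{1/2}$, together with the triangle inequality $\norm{\nabla \phi_0}_{L^2} \le \norm{\mathbf U}_{L^2} + \norm{\phi_0\mathbf a}_{L^2}$, then allows one to absorb $\tfrac12\norm{\nabla \phi_0}_{L^2}$ onto the left-hand side and reach \eqref{M:190}.

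To remove the a priori assumption $\phi_0 \in H^1$, I would mollify: set $\phi_0^\epsilon = \phi_0 * \eta_\epsilon$ (a standard approximate identity), which lies in $H^s$ for every $s$, and $\mathbf U^\epsilon = \nabla \phi_0^\epsilon - i \phi_0^\epsilon \mathbf a$. The preceding computation applied to $\phi_0^\epsilon$ yields a bound on $\norm{\nabla \phi_0^\epsilon}_{L^2}$ in terms of $\norm{\mathbf U^\epsilon}_{L^2}$, $\norm{\phi_0}_{L^2}$, and $\norm{\mathbf a}_{\dot H^1}$. Writing
$$
\mathbf U^\epsilon = \eta_\epsilon * \mathbf U + i\bigl[\eta_\epsilon * (\phi_0 \mathbf a) - (\eta_\epsilon * \phi_0)\,\mathbf a\bigr],
$$
a Friedrichs-type commutator estimate---using $\mathbf a \in \dot H^1 \hookrightarrow L^6$ and the pointwise bound $|\mathbf a(x-y)-\mathbf a(x)| \le |y|\int_0^1|\nabla \mathbf a(x-sy)|\,ds$---shows the bracketed term remains bounded (and in fact tends to zero) in $L^2$ as $\epsilon \to 0$. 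Uniform control of $\norm{\nabla \phi_0^\epsilon}_{L^2}$ together with weak compactness then identifies $\nabla \phi_0$ as an $L^2$ field obeying \eqref{M:190}.

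The main obstacle here is the commutator/approximation step; the a priori computation itself is a short H\"older/Sobolev/Young exercise.
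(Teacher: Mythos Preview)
Your a priori computation (H\"older, $L^2$--$L^6$ interpolation, Sobolev, Young to absorb) is exactly the paper's final step for \eqref{M:190}. The gap is in the mollification step for the qualitative claim $\phi_0\in H^1$: the Friedrichs-type commutator estimate you invoke does not give $L^2$ control with the available regularity. After the pointwise bound on $\mathbf a(\cdot-y)-\mathbf a(\cdot)$ you are left with products $|\nabla\mathbf a(\cdot-sy)|\,|\phi_0(\cdot-y)|$, and H\"older with the only exponents on hand ($\nabla\mathbf a\in L^2$, $\phi_0\in L^2$) lands the commutator in $L^1$ with bound $O(\epsilon)$, not in $L^2$. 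Using $\mathbf a\in L^6$ instead places each of the two terms $\eta_\epsilon*(\phi_0\mathbf a)$ and $(\eta_\epsilon*\phi_0)\mathbf a$ in $L^{3/2}$ uniformly, but both blow up like $\epsilon^{-1/2}$ in $L^2$, and the single factor of $\epsilon$ coming from $|y|\eta_\epsilon(y)$ does not close the gap. The circularity is essential: an $L^2$ commutator bound would require $\phi_0\in L^3$ (so that $\phi_0\in L^3$, $\mathbf a\in L^6$ give a product in $L^2$), which is precisely what you are trying to establish. Hence $\norm{\mathbf U^\epsilon}_{L^2}$ is not uniformly controlled and the absorption cannot be carried out.

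The paper avoids approximation altogether. Since $\nabla\phi_0 = \mathbf U + i\phi_0\mathbf a \in L^2 + L^{3/2}$ (the second summand via $L^2\cdot L^6\hookrightarrow L^{3/2}$), the Riesz potential bound $|\phi_0|\le C\,I_1(|\nabla\phi_0|)$ together with Hardy--Littlewood--Sobolev yields a pointwise domination $|\phi_0|\le f+g$ with $f\in L^6$, $g\in L^3$. Combined with $\phi_0\in L^2$ this forces $\phi_0\in L^3$, whence $\phi_0\mathbf a\in L^3\cdot L^6\subset L^2$ and therefore $\nabla\phi_0\in L^2$. With $\phi_0\in H^1$ established, your a priori chain then gives \eqref{M:190} verbatim.
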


\begin{proof} Write $\nabla \phi_0 = \mathbf U + \mathbf V$, where $\mathbf V = i \phi_0 \mathbf a$. Since $\mathbf a \in \dot H^1 \subset L^6$, we have $\mathbf V \in L^{\frac32}$, hence $\nabla \phi_0 \in L^2 + L^{\frac32}$. We claim that this implies $\phi_0 \in L^6 + L^3$. Granting this for the moment, we get $\phi_0 \in (L^6 + L^3) \cap L^2 \subset L^3$, but this implies $\mathbf V \in L^2$, hence $\nabla \phi_0 \in L^2$, which is what we wanted.

To prove the claim, $\phi_0 \in L^6 + L^3$, we use the fact (see \cite[Ch.~V]{Stein:1970}) that
\begin{equation}\label{M:200}
  \abs{f} \le C I_1(\abs{\nabla f})
  \qquad \text{for all $f \in C_c^\infty(\R^3)$},
\end{equation}
where $I_1 f = (-\Delta)^{-\frac12} f = \frac{\gamma}{\abs{x}^2} * f$ is a Riesz potential (here $\gamma > 0$ is a constant). By the $L^p$ inequality for potentials (see \cite[Ch.~V, Thm.~1]{Stein:1970}), $\norm{I_1(\abs{\mathbf U})}_{L^6} \le C \norm{\mathbf U}_{L^2}$ and $\norm{I_1(\abs{\mathbf V})}_{L^3} \le C \norm{\mathbf V}_{L^{\frac32}}$, so via a regularization we get from \eqref{M:200} that
$$
  \abs{\phi_0} \le C I_1(\abs{\nabla \phi_0})
  \le CI_1(\abs{\mathbf U}) + CI_1(\abs{\mathbf V})
$$
a.e.~in $\R^3$. Thus, $\abs{\phi_0} \le f + g$, where $f \in L^6$ and $g \in L^3$, hence $\phi_0 \in L^6 + L^3$.

Finally, we prove \eqref{M:190}:
\begin{align*}
  \norm{\nabla \phi_0}_{L^2}
  &\le \norm{\mathbf U}_{L^2} + \norm{\phi_0}_{L^3}\norm{\mathbf a}_{L^6}
  \\
  &\le \norm{\mathbf U}_{L^2} +
  \norm{\phi_0}_{L^6}^{1/2}\norm{\phi_0}_{L^2}^{1/2}\norm{\mathbf a}_{L^6}
  \\
  &\le \norm{\mathbf U}_{L^2} +
  C\norm{\nabla\phi_0}_{L^2}^{1/2}\norm{\phi_0}_{L^2}^{1/2}\norm{\mathbf a}_{\dot H^1}
  \\
  &\le \norm{\mathbf U}_{L^2} +
  \frac12\norm{\nabla\phi_0}_{L^2}
  +
  C'\norm{\phi_0}_{L^2}\norm{\mathbf a}_{\dot H^1}^2,
\end{align*}
where Young's inequality was used at the end.
\end{proof}

In view of Lemma \ref{M:Lemma}, and our choice of initial gauge, which guarantees $\mathbf a \in \dot H^1$, the assumption $\mathcal E(0) < \infty$ is equivalent to
\begin{equation}\label{M:210}
\left\{
\begin{aligned}
  \phi\init &= \phi_0 \in H^1(\R^3;\C),
  \\
  \partial_t\phi\init &= \phi_1 \in L^2(\R^3;\C),
  \\
  \mathbf E\init &= \mathbf E_0 \in L^2(\R^3;\R^3),
  \\
  \mathbf B\init &= \mathbf B_0 \in L^2(\R^3;\R^3).
\end{aligned}
\right.
\end{equation}
Note also that, since $U_0=\phi_1$ (recall \eqref{M:180}), the constraint \eqref{M:130} becomes
\begin{equation}\label{M:220}
  \nabla \cdot \mathbf E_0
  = - \im \left( \phi_0 \overline{ \phi_1 } \right),
  \qquad
  \nabla \cdot \mathbf B_0 = 0.
\end{equation}

\begin{remark}\label{A:Rem3}
The role of the constraint \eqref{M:220} is to fix the curl-free parts of $\mathbf E_0$ and $\mathbf B_0$ in $L^2$. Then $\phi_0 \in H^1$ and $\phi_1 \in L^2$ can be chosen arbitrarily, as can the divergence-free parts of $\mathbf E_0$ and $\mathbf B_0$ in $L^2$. Indeed, \eqref{M:220} determines the curl-free part of $\mathbf E_0$, namely $\mathbf E_0^{\text{cf}}
  =
  \Delta^{-1} \nabla ( \nabla \cdot \mathbf E_0 )
  =
  - \Delta^{-1} \nabla \im \left( \phi_0 \overline{ \phi_1 } \right),
$
and this belongs to $L^2$ since $(-\Delta)^{-\frac12} \nabla$ is bounded on $L^2$ and
$$
  \norm{(-\Delta)^{-\frac12} \left( \phi_0 \overline{ \phi_1 } \right)}_{L^2}
  \le C
  \norm{\phi_0 \overline{ \phi_1 }}_{L^{\frac65}}
  \le C
  \norm{\phi_0}_{L^3}
  \norm{\phi_1}_{L^2}
  \le C'
  \norm{\phi_0}_{H^1}
  \norm{\phi_1}_{L^2},
$$
where we used again the $L^p$ inequality for potentials (see \cite[Ch.~V, Thm.~1]{Stein:1970}).
\end{remark}

We can now state our main result.

\begin{theorem}\label{M:Thm}
Given finite energy data \eqref{M:210} satisfying \eqref{M:220}, set $a_0=\dot a_0 = 0$, and let $(\mathbf a ,\dot{\mathbf a}) \in \dot H^1(\R^3;\R^3) \times L^2(\R^3;\R^3)$ be the unique solution of \eqref{M:140}.

There exists a unique global solution
\begin{align*}
  \phi &\in C\bigl(\R;H^1(\R^3;\C)\bigr) \cap C^1\bigl(\R;L^2(\R^3;\C)\bigr),
  \\
  \mathbf E, \mathbf B &\in C\bigl(\R;L^2(\R^3,\R^3)\bigr),
\end{align*}
of the M-K-G system \eqref{A:2}--\eqref{A:8} with initial condition \eqref{M:210}, relative to a real-valued 4-potential $A$ such that
$$
  A,\partial_t A \in C\bigl(\R;\mathcal D'(\R^3)\bigr),
  \qquad
  \partial^\mu A_\mu = 0,
  \qquad
  (A,\partial_t A)\init=(a,\dot a),
$$
and such that the total energy is conserved:
$$
  \mathcal E(t) = \mathcal E(0) \qquad \text{for all $t$},
$$
where $\mathcal E(0)$ is given by \eqref{M:120}, with $U$ defined by \eqref{M:180}.
\end{theorem}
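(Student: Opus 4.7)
The plan is to follow the standard trilogy for nonlinear wave equations: local well-posedness in the energy class by a contraction argument in wave-Sobolev spaces, extension to a global solution using conservation of $\mathcal E$, and finally uniqueness. Rewriting \eqref{A:104} as a coupled system of nonlinear wave equations, one has
\[
  \square\phi = 2iA^\mu\partial_\mu\phi - A_\mu A^\mu \phi - m^2\phi,
  \qquad
  \square A_\mu = -J_\mu,
\]
with $J_\mu = \im\bigl(\phi\,\overline{D^{(A)}_\mu\phi}\bigr)$. With the initial-gauge choice of this section, Lemma~\ref{M:Lemma} places the data in $\phi_0\in H^1$, $\phi_1\in L^2$, $(\mathbf a,\dot{\mathbf a})\in\dot H^1\times L^2$ and $(\mathbf E_0,\mathbf B_0)\in L^2$. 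The natural iteration space is then the wave-Sobolev space $X^{s,b}$ adapted to $\square$, at regularity $s=1$ for both $\phi$ and for the components of $A$. Once the bilinear and trilinear $X^{s,b}$ estimates are in place, a contraction-mapping argument on a short time interval $[0,T]$ yields local existence.

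The main obstacle is controlling the quadratic terms $A^\mu\partial_\mu\phi$ in the equation for $\phi$ and the principal piece $\im(\phi\,\overline{\partial_\mu\phi})$ of $J_\mu$ at regularity $X^{1,b}$: for the bare products these estimates fail, and one must first exhibit cancellations. This is precisely where the null structure of Section~\ref{N} enters. Following the Maxwell-Dirac analogue of~\cite{Selberg:2008b}, I would decompose $\mathbf A$ into its divergence-free and curl-free parts, use the Lorenz condition $\partial_t A_0 = \nabla\cdot\mathbf A$ to trade the time derivative of $A_0$ for a spatial divergence of $\mathbf A$, and thereby recast the offending bilinear terms as sums of genuine Klainerman null forms ($Q_0$ and $Q_{jk}$) plus manifestly lower-order remainders in which the curl-free components appear with an elliptic gain. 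With this algebraic reduction done, the bilinear $X^{s,b}$ null-form product estimates established in Sections~\ref{L}--\ref{LL} close the iteration and give a local existence time $T$ depending only on the energy-class norm of the data. The cubic terms $A_\mu A^\mu\phi$ and $\abs{\phi}^2A_\mu$ are lower order and can be handled by Strichartz-type estimates together with the embeddings $\dot H^1\hookrightarrow L^6$ and $\abs{\nabla}^{-1}H^s\hookrightarrow L^6+H^{s+1}$ recalled in the introduction.

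For globalization the difficulty is that in Lorenz gauge the potential $A$ carries no a priori conserved quantity; only the physical observables and $\phi$ do. However, given a local solution on $[0,T_\ast)$, conservation of $\mathcal E$ together with Lemma~\ref{M:Lemma} keeps $\norm{\phi(t)}_{H^1}$, $\norm{\mathbf E(t)}_{L^2}$, $\norm{\mathbf B(t)}_{L^2}$ bounded uniformly in $t$ in terms of $\mathcal E(0)$. At any slice $t=t_\ast<T_\ast$, the gauge transformation of Remark~\ref{A:Rem1}, with $\square\chi=0$, preserves the Lorenz condition and resets $a_0=\dot a_0=0$ without affecting $\mathbf E,\mathbf B$. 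Applying the local theorem to the transformed data produces a continuation past $t_\ast$ of length depending only on $\mathcal E(0)=\mathcal E(t_\ast)$; iterating yields a global solution, and undoing the successive gauge transformations (whose generators satisfy $\square\chi=0$ and can be patched on all of $\R^{1+3}$) gives a global $A$ in the original gauge. Uniqueness, treated in Section~\ref{U}, is obtained by running the contraction estimate on the difference of two putative solutions in the same wave-Sobolev framework; the extra work there is to upgrade uniqueness in the iteration space to uniqueness in the continuous-in-time energy class stated in the theorem.
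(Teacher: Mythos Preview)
Your overall architecture---local well-posedness via $X^{s,b}$ contraction, globalization by energy conservation plus the gauge-reset of Remark~\ref{A:Rem1}, then unconditional uniqueness---matches the paper. But two of your technical assertions about the null structure are not correct, and they are exactly the points on which the proof turns.

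First, you say the bilinear term $A^\mu\partial_\mu\phi$ reduces to ``genuine Klainerman null forms $Q_0$ and $Q_{jk}$ plus lower-order remainders in which the curl-free components appear with an elliptic gain.'' There is no elliptic gain in Lorenz gauge: that is Coulomb-gauge reasoning, where $\Delta A_0=\rho$. Here $A_0$ and $\mathbf A^{\text{cf}}$ solve wave equations, so the term $P_1=-A_0\partial_t\phi+\mathbf A^{\text{cf}}\cdot\nabla\phi$ cannot be treated as a remainder. The paper's key observation (Section~\ref{N}) is that $P_1$ is \emph{itself} a null form once one uses $\partial_t A_0=\nabla\cdot\mathbf A$ and the half-wave splittings $A_0=A_{0,+}+A_{0,-}$, $\phi=\phi_++\phi_-$: the resulting bilinear symbol $\mathfrak a_{(\pm_1,\pm_2)}$ satisfies the angle bound of Lemma~\ref{N:Lemma1}. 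This is not a $Q_0$ or $Q_{jk}$; it is a null form in the broader sense that its symbol is bounded by the angle between interacting frequencies.

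Second, you claim null structure is needed for $\im(\phi\,\overline{\partial_\mu\phi})$ in the $A$ equation. The paper explicitly notes (subsection~\ref{N:290}) that this term has \emph{no} null structure. Instead, the iteration accepts a small loss: $A_\pm$ is split into a homogeneous part in $\abs{\nabla}^{-1}X_\pm^{0,\beta}$ (forced by $a\in\dot H^1$, not $H^1$) and an inhomogeneous part in $X_\pm^{1-\delta,\beta}$, and estimate \eqref{L:320} closes without any cancellation. Null structure for $Q_{0j},Q_{ij}$ does appear, but only later, in the equations \eqref{N:292} for $\mathbf E,\mathbf B$, to recover $\mathbf E,\mathbf B\in C([-T,T];L^2)$; it is not used to iterate $A$. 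Your proposed iteration ``at regularity $s=1$ for the components of $A$'' would not close as stated.
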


For the potential $A$ we can only prove the regularity
$A \in C(\R;\dot H^1 + H^{1-\delta})$ and $\partial_t A \in C(\R;H^{-\delta})$ for any $\delta > 0$, suggesting a small loss of regularity compared to the data $(a,\dot a)$. But as noted, the regularity of $A$ as such is not of interest.

The main step in the proof of Theorem \ref{M:Thm} is to obtain local well-posedness; the global result then follows by conservation of energy. By a contraction argument, the local well-posedness is reduced to proving certain nonlinear estimates in $X^{s,b}$ spaces adapted to the linear part of the evolution, and here the null structure in Lorenz gauge is crucial: without it, the estimates would be just out of reach. Once we have the required structure, which is the main new contribution made here, we can reduce to known product estimates for the $X^{s,b}$ spaces in question.

The contraction argument does not give the unconditional uniqueness as stated in Theorem \ref{M:Thm}, but only uniqueness in the smaller contraction space. To prove the unconditional uniqueness, we use arguments similar to those in \cite{Zhou:2000} and \cite{Masmoudi:2003b}, which rely fundamentally on the fact that the problem we are considering is subcritical; for \mbox{M-K-G}, the scale invariant regularity for $\phi$ is $\dot H^{\frac12}$, whereas the energy corresponds to $H^1$, so the problem is energy-subcritical. In this connection, we mention that in Coulomb gauge, local well-posedness has been proved almost all the way down to the critical regularity; see \cite{Machedon:2004}, and also \cite{Cuccagna:1999}. We do not investigate here how far down in regularity one can go in Lorenz gauge, but it is clear from our proof that one can go at least some way below energy (there is some headroom in all the estimates). Low regularity results have also been obtained (for the Yang-Mills equations) in the temporal gauge, but are limited to small data; see \cite{Tao:2003}.

We remark that, since local well-posedness is proved by a contraction argument based on estimates in $X^{s,b}$ type spaces, it is a standard fact that the solutions in Theorem \ref{M:Thm} enjoy continuous dependence on the data and persistence of higher regularity: $(\phi,\partial_t \phi,\mathbf E,\mathbf B)$ depends continuously on the data \eqref{M:210}, \eqref{M:220} in $X = H^1 \times L^2 \times L^2 \times L^2$, locally uniformly in time with values in the same space $X$, and higher Sobolev regularity of the data persists for all time, in the sense that if the data belong to $X^k = H^{1+k} \times H^k \times H^k \times H^k$ for some $k > 0$, then the solution describes a continuous curve in $X^k$ for all time. In particular, this means that our solutions are limits, again locally uniformly in time with values in $X$, of smooth solutions with $C_c^\infty$ initial data.

\section{Null structure of M-K-G in Lorenz gauge}\label{N}

Using the definition \eqref{A:5} of $D^{(A)}_\mu$, and the Lorenz gauge condition $\partial^\mu A_\mu = 0$, we write the first two equations in \eqref{A:104} as
\begin{equation}\label{N:100}
\left\{
\begin{aligned}
  (\square - m^2)\phi &=
  \mathcal M(A,\phi)
  \equiv
  2iA^\mu\partial_\mu\phi + A_\mu A^\mu \phi,
  \\
  \square A &= \mathcal N(A,\phi)
  \equiv
  - \im \left( \phi \overline{\partial \phi} \, \right)
  - A \abs{\phi}^2.
\end{aligned}
\right.
\end{equation}
The key terms here are the bilinear ones; the cubic terms turn out to be much easier to control, since they do not contain derivatives.

\subsection{Null structure of the term $A^\mu\partial_\mu\phi$} Recall the splitting of $\mathbf A$ (or indeed any
vector field on $\R^3$) into its divergence-free and curl-free parts:
\begin{equation}\label{N:110}
\mathbf A = \underbrace{-\Delta^{-1} \nabla \times \nabla
\times \mathbf A}_{\text{divergence-free}}
  + \underbrace{\Delta^{-1} \nabla ( \nabla \cdot \mathbf A )}_{\text{curl-free}}
  \equiv
  \mathbf A^{\text{df}} + \mathbf A^{\text{cf}}.
\end{equation}
Now expand:
\begin{equation}\label{N:130}
  A^\mu\partial_\mu\phi
  = \left( - A_0 \partial_t \phi + \mathbf A^{\text{cf}} \cdot \nabla \phi \right)
  + \mathbf A^{\text{df}} \cdot \nabla \phi
  \equiv P_1 + P_2.
\end{equation}

The term $P_2$ was shown by Klainerman and Machedon \cite{Klainerman:1994b} to be a null form:
\begin{multline}\label{N:140}
  P_2 = \mathbf A^{\text{df}} \cdot \nabla \phi
  =
  - \epsilon^{jkl} \partial_k w_l  \partial_j
\phi
  =
  (\nabla w_l\times \nabla \phi)^l,
  \\ \text{where $\mathbf w = \Delta^{-1} \nabla \times \mathbf A = \Delta^{-1} \mathbf B$}.
\end{multline}
Here $w_l = w^l$ denotes the $l$-th component of a 3-vector $\mathbf w$, $\epsilon^{jkl}$ is the Levi-Civita symbol, and we use the summation convention.

Our main new observation is that the term $P_1$ is also a null form. Note that this term was not an issue in \cite{Klainerman:1994b}, since there the Coulomb gauge was used, and then $\mathbf A^{\text{cf}} = 0$, whereas $A_0$ solves an elliptic equation and therefore has sufficiently good regularity properties so that no special structure is needed to control $A_0\partial_t \phi$. In Lorenz gauge, on the other hand, the term $P_1$ must be dealt with, and we now show how this is done.

First observe that if we assume the Lorenz gauge condition, then $\partial_t A_0 = \nabla \cdot \mathbf A$, hence $\mathbf A^{\text{cf}} = \Delta^{-1} \nabla \partial_t A_0$. Therefore,
\begin{equation}\label{N:150}
  P_1 = - A_0 \partial_t \phi + \mathbf A^{\text{cf}} \cdot \nabla \phi
  = - A_0 \partial_t \phi 
  +
  \Delta^{-1} \nabla \partial_t A_0 \cdot \nabla \phi.
\end{equation}
Now if we denote the space-time Fourier variables of $A_0$ and $\phi$ by $(\tau,\xi)$ and $(\lambda,\eta)$, respectively, where $\tau,\lambda \in \R$ are the temporal frequencies and $\xi,\eta \in \R^3$ are the spatial frequencies, then the symbol of $P_1$ is $-i\lambda + i \tau \frac{\xi\cdot\eta}{\abs{\xi}^2}$, and this vanishes if $(\tau,\xi)$ and $(\lambda,\eta)$ are parallel null vectors, that is, if $\tau=\pm\abs{\xi}$ and $(\lambda,\eta) = c (\tau,\xi)$ for some $c \in \R$. Thus, $P_1$ is a null form. It is possible to quantify the null cancellation in terms of the angle between the space-time frequencies, but it turns out to be more convenient to get rid of the temporal frequencies in the null form symbol, by splitting $A_0$ and $\phi$ using standard spectral decompositions for the wave and Klein-Gordon equations, which we pause to recall here.

Let us start with the homogeneous wave equation $\square u = 0$, which can be written as a first order system:
$
  \frac{\partial}{\partial t} \left( \begin{smallmatrix}
          u \\
        u_t \\
     \end{smallmatrix} \right)
  =
  \left( \begin{smallmatrix}
  0 & 1 \\
  \Delta & 0
  \end{smallmatrix} \right)
  \left( \begin{smallmatrix}
        u \\
        u_t \\
  \end{smallmatrix} \right),
$
where $u_t = \partial_t u$. Diagonalizing the symbol
$
  \left(\begin{smallmatrix}
  0 & 1 \\
  -\abs{\xi}^2 & 0
  \end{smallmatrix}\right),
$
one is led to the transformation $(u,u_t) \to (u_+,u_-)$, where
$$
  u_\pm = \frac12 \left( u \pm (i\abs{\nabla})^{-1} u_t \right).
$$
Here $\abs{\nabla} = \sqrt{-\Delta}$ is the multiplier with symbol $\abs{\xi}$. Obviously,
$$
  u= u_+ + u_-, \qquad u_t = i\abs{\nabla}( u_+ - u_- ),
$$
and $\square u = 0$ splits into the pair of equations $\left(i\partial_t \pm \abs{\nabla}\right) u_\pm = 0$. More generally, the inhomogeneous equation $\square u = F$ splits into the pair of equations
$$
  \left(i\partial_t \pm \abs{\nabla}\right) u_\pm = - (\pm 2\abs{\nabla})^{-1} F.
$$
Moreover, the same statements hold for the Klein-Gordon equation $(\square - m^2) u = F$, except that $\abs{\nabla}$ is then replaced by the multiplier $\angles{\nabla}_m = \sqrt{m^2 - \Delta}$ with symbol
$
  \angles{\xi}_m
  = \sqrt{m^2 + \abs{\xi}^2}.
$

Applying these decompositions to \eqref{N:100}, we therefore define
\begin{equation}\label{N:200}
\left\{
\begin{aligned}
  \phi_\pm &= \frac12 \left( \phi \pm (i\angles{\nabla}_m)^{-1} \phi_t \right),
  \\
  A_\pm &= \frac12 \left( A \pm (i\abs{\nabla})^{-1} A_t \right),
\end{aligned}
\right.
\end{equation}
thereby transforming \eqref{N:100} to
\begin{equation}\label{N:205}
\left\{
\begin{aligned}
  \left(i\partial_t \pm \angles{\nabla}_m\right) \phi_\pm
  &=
  - (\pm 2\angles{\nabla}_m)^{-1} \mathcal M(A,\phi),
  \\
  \left(i\partial_t \pm \abs{\nabla}\right) A_\pm
  &=
  - (\pm 2\abs{\nabla})^{-1} \mathcal N(A,\phi).
\end{aligned}
\right.
\end{equation}
Now $\phi = \phi_+ + \phi_-$, $\partial_t \phi = i\angles{\nabla}_m (\phi_+ - \phi_-)$, $A_0 = A_{0,+} + A_{0,-}$ and $\partial_t A_0 = i\abs{\nabla} (A_{0,+} - A_{0,-})$, so from \eqref{N:150} we get
\begin{equation}\label{N:210}
\begin{aligned}
  iP_1
  &= (A_{0,+} + A_{0,-}) \angles{\nabla}_m (\phi_+ - \phi_-)
  +
  \abs{\nabla}^{-1} \nabla (A_{0,+} - A_{0,-}) \cdot \nabla (\phi_+ + \phi_-)
  \\
  &=
  \sum_{\pm_1,\pm_2}
  \pm_2 \mathfrak A_{(\pm_1,\pm_2)}(A_{0,\pm_1}, \phi_{\pm_2}),
\end{aligned}
\end{equation}
where
\begin{equation}\label{N:220}
  \mathfrak{A_{(\pm_1,\pm_2)}}(f,g)
  =
  f \angles{\nabla}_m g
  +
  \abs{\nabla}^{-1} \nabla (\pm_1 f) \cdot
  \nabla (\pm_2 g)
\end{equation}
is a bilinear operator acting on functions of $x$. Fourier transformation in $x$ gives
\begin{equation}\label{N:230}
  \mathcal F\left( \mathfrak{A_{(\pm_1,\pm_2)}}(f,g) \right)(\xi)
  =
  \int_{\R^3} \mathfrak a_{(\pm_1,\pm_2)}(\eta,\xi-\eta)
  \widehat f(\eta) \widehat g(\xi-\eta) \d \eta,
\end{equation}
where
\begin{equation}\label{N:240}
  \mathfrak a_{(\pm_1,\pm_2)}(\eta,\zeta)
  \simeq 
  \angles{\zeta}_m
  -\frac{(\pm_1\eta) \cdot (\pm_2\zeta)}{\abs{\eta}}.
\end{equation}
The following estimate shows that, up to a harmless lower order term due to the positive mass $m$ in the Klein-Gordon equation, $\mathfrak{A_{(\pm_1,\pm_2)}}$ is a null form in the sense of \cite{Selberg:2007d}. We write $\theta(\eta,\zeta)$ for the angle between nonzero vectors $\eta,\zeta \in \R^3$.

\begin{lemma}\label{N:Lemma1}
$\Abs{\mathfrak a_{(\pm_1,\pm_2)}(\eta,\zeta)} \lesssim m + \abs{\zeta}\theta(\pm_1\eta,\pm_2\zeta)$ for all nonzero $\eta,\zeta \in \R^3$.
\end{lemma}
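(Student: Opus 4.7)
The plan is to reduce the symbol to a simple trigonometric expression in the angle $\theta' := \theta(\pm_1\eta, \pm_2\zeta)$ and then separate the mass contribution from the angular (null) contribution.

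First I would rewrite the symbol in a cleaner form. Since $|\pm_1\eta| = |\eta|$ and $|\pm_2\zeta| = |\zeta|$, the definition of the angle gives
\[
  (\pm_1\eta) \cdot (\pm_2\zeta) = |\eta|\,|\zeta|\cos\theta',
\]
so that
\[
  \mathfrak a_{(\pm_1,\pm_2)}(\eta,\zeta)
  \simeq \langle \zeta \rangle_m - |\zeta|\cos\theta'.
\]
This eliminates the explicit dependence on the signs $\pm_j$ and on $\eta$ (other than through $\theta'$), and reduces the claim to showing
\[
  \langle \zeta \rangle_m - |\zeta|\cos\theta' \lesssim m + |\zeta|\,\theta'.
\]

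The second step is the standard trick of adding and subtracting $|\zeta|$:
\[
  \langle \zeta \rangle_m - |\zeta|\cos\theta'
  = \bigl( \langle \zeta \rangle_m - |\zeta| \bigr) + |\zeta|(1-\cos\theta').
\]
For the first summand I use the conjugate-factor identity
\[
  \langle \zeta \rangle_m - |\zeta|
  = \frac{m^2}{\langle \zeta \rangle_m + |\zeta|}
  \le \frac{m^2}{m} = m,
\]
which accounts for the $m$ on the right-hand side. For the second summand, the half-angle formula gives $1-\cos\theta' = 2\sin^2(\theta'/2) \le \theta'^2/2$, and since $\theta' \in [0,\pi]$ we have $\theta'^2 \le \pi\theta'$, so
\[
  |\zeta|(1-\cos\theta') \le \tfrac{\pi}{2}\,|\zeta|\,\theta'.
\]
Adding the two bounds finishes the proof.

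There is no real obstacle here; the only thing to get right is bookkeeping of the signs, which is handled once and for all by expressing $\cos\theta'$ via the generic formula above (the case split on whether $\pm_1\pm_2 = +$ or $-$ becomes automatic, since if $\pm_1\pm_2 = -$ then $\theta' = \pi - \theta(\eta,\zeta)$, exactly compensating the sign change of the inner product). The essential point — the one that is special to the symbol $\mathfrak a_{(\pm_1,\pm_2)}$ and justifies calling it a null form — is the cancellation $\langle\zeta\rangle_m - |\zeta| = O(m)$ when the spatial frequencies are almost parallel in the appropriate sense; everything else is elementary trigonometry.
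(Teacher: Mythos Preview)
Your proof is correct and follows essentially the same route as the paper's: both decompose the symbol as $(\langle\zeta\rangle_m - |\zeta|) + |\zeta|(1-\cos\theta')$, bound the first piece by $O(m)$ and the second by $O(|\zeta|\theta'^2) \le O(|\zeta|\theta')$. The only cosmetic difference is that the paper reduces to $\pm_1=\pm_2=+$ by symmetry, whereas you absorb all sign cases into the single angle $\theta'=\theta(\pm_1\eta,\pm_2\zeta)$; your treatment is slightly more explicit (the conjugate-factor bound for $\langle\zeta\rangle_m - |\zeta|$, the half-angle formula), but the argument is the same.
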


\begin{proof}
Without loss of generality take $\pm_1=\pm_2=+$. Then we estimate
$$
  \angles{\zeta}_m
  - \frac{\eta \cdot \zeta}{\abs{\eta}}
  =
  \left(\sqrt{m^2+\abs{\zeta}^2} - \abs{\zeta}\right)
  + \abs{\zeta} \left( 1 - \frac{\eta\cdot\zeta}{\abs{\eta}\abs{\zeta}} \right)
  = O(m) + \abs{\zeta}\, O\left(\theta(\eta,\zeta)^2\right).
$$
Note that this is actually stronger than what we need, since the angle is squared.
\end{proof}

\subsection{Null structure in the Maxwell part}\label{N:290}

There is no null structure in the bilinear term $\im \left( \phi \overline{\partial \phi} \, \right)$ in the equation for $A$ (the second equation in \eqref{N:100}), but as remarked already, it is the regularity of $\mathbf E, \mathbf B$ which is of interest, not that of $A$, and the equations for $\mathbf E, \mathbf B$ do exhibit null structure, independently of the gauge.

From Maxwell's equations we have $\square \mathbf E = \partial_t \mathbf J + \nabla \rho$ and $\square \mathbf B = - \nabla \times \mathbf J$, where $\rho$ and $\mathbf J$ are given by \eqref{A:8}, hence
\begin{equation}\label{N:292}
\left\{
\begin{aligned}
  \square\mathbf E
  &=
  \im \left( \partial_t\phi\overline{\nabla\phi} - \nabla\phi\overline{\partial_{t}\phi} \right)
  + \partial_t(\mathbf A \abs{\phi}^2) - \nabla (A_0\abs{\phi}^2),
  \\
  \square\mathbf B
  &=
  -\im \left( \nabla\phi\times \overline{\nabla\phi})-\nabla\times
(\mathbf A \abs{\phi}^2 \right).
\end{aligned}
\right.
\end{equation}
Here $\partial_t\phi\overline{\nabla\phi} - \nabla\phi\overline{\partial_{t}\phi}$ and $\nabla\phi\times\overline{\nabla \phi}$ are vectors whose
components consist of the null forms $Q_{0j}(\phi, \overline \phi)$ and $Q_{ij}(\phi, \overline \phi)$, respectively, where $Q_{0j}(u,v)=\partial_tu\partial_jv-\partial_ju\partial_t v$ and $Q_{ij}(u,v)=\partial_iu\partial_jv-\partial_ju\partial_i v$ are Klainerman's null forms. For our purposes, however, it is better to recast the null structure in terms of the splitting $\phi=\phi_+ + \phi_-$, as we did for $P_1$ in \eqref{N:210}. Recalling that $\partial_t\phi = i\angles{\nabla}_m(\phi_+-\phi_-)$, we find
\begin{align}
  \label{N:300}
  \partial_t\phi\overline{\nabla\phi} - \nabla\phi\overline{\partial_{t}\phi}
  &=
  \sum_{\pm_1,\pm_2}
  (\pm_1 1)(\pm_2 1) \mathfrak B_{(\pm_1,\pm_2)}(\phi_{\pm_1}, \phi_{\pm_2}),
  \\
  \label{N:310}
  \nabla\phi\times \overline{\nabla\phi}
  &=
  \sum_{\pm_1, \pm_2} \mathfrak C_{(\pm_1,\pm_2)}(\phi_{\pm_1},\phi_{\pm_2})
\end{align}
where
\begin{align}
  \label{N:320}
  &\mathfrak B_{(\pm_1,\pm_2)}(f,g)
  =
  i \left(\angles{\nabla}_m f \overline{\nabla(\pm_2 g)}
  +
  \nabla(\pm_1 f) \overline {\angles{\nabla}_m g} \right),
  \\
  \label{N:330}
  &\mathfrak C_{(\pm_1,\pm_2)}(f,g)
  =
  \nabla f \times \overline{\nabla g},
\end{align}
and the associated symbols are, respectively,
\begin{align}
  \label{N:340}
  &\mathfrak b_{(\pm_1,\pm_2)}(\eta,\zeta)
  =
  \angles{\eta}_m (\pm_2\zeta)
  -
  \angles{\zeta}_m (\pm_1\eta),
  \\
  \label{N:350}
  &\mathfrak c_{(\pm_1,\pm_2)}(\eta,\zeta)
  =
  \eta \times \zeta,
\end{align}
which satisfy the following estimates, demonstrating the null structure (in the case of $\mathfrak b_{(\pm_1,\pm_2)}$ up to a lower order term due to the positive mass $m$):

\begin{lemma}\label{N:Lemma2} For all nonzero $\eta,\zeta \in \R^3$,
\begin{align}
  \label{N:350}
  \Abs{\mathfrak b_{(\pm_1,\pm_2)}(\eta,\zeta)} &\lesssim m (\abs{\eta} + \abs{\zeta}) + \abs{\eta}\abs{\zeta}\theta(\pm_1\eta,\pm_2\zeta),
  \\
  \label{N:360}
  \Abs{\mathfrak c_{(\pm_1,\pm_2)}(\eta,\zeta)} &\lesssim \abs{\eta}\abs{\zeta}\theta(\pm_1\eta,\pm_2\zeta).
\end{align}
\end{lemma}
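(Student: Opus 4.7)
The estimate for $\mathfrak c$ is essentially trivial: since sign flips preserve cross products up to an overall sign, $\abs{\eta \times \zeta} = \abs{(\pm_1\eta) \times (\pm_2\zeta)} = \abs{\eta}\abs{\zeta}\sin\theta(\pm_1\eta,\pm_2\zeta) \le \abs{\eta}\abs{\zeta}\theta(\pm_1\eta,\pm_2\zeta)$. So I would dispose of this in one line at the end.

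For $\mathfrak b$, my plan is to first reduce to the case $\pm_1=\pm_2=+$ by the substitution $\eta \mapsto \pm_1\eta$, $\zeta \mapsto \pm_2\zeta$, which leaves the right-hand side invariant (lengths and the angle $\theta(\pm_1\eta,\pm_2\zeta)$ depend only on the substituted vectors). Concretely, after substitution the quantity to bound is $\angles{\eta}_m \zeta - \angles{\zeta}_m \eta$.

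The key step is to separate the massless principal part from the mass correction:
\begin{equation*}
  \angles{\eta}_m \zeta - \angles{\zeta}_m \eta
  =
  \bigl( \abs{\eta}\zeta - \abs{\zeta}\eta \bigr)
  +
  \bigl( (\angles{\eta}_m - \abs{\eta})\zeta - (\angles{\zeta}_m - \abs{\zeta})\eta \bigr).
\end{equation*}
For the first term I would compute $\bigabs{\abs{\eta}\zeta - \abs{\zeta}\eta}^2 = 2\abs{\eta}^2\abs{\zeta}^2(1-\cos\theta) = 4\abs{\eta}^2\abs{\zeta}^2\sin^2(\theta/2)$, where $\theta = \theta(\eta,\zeta)$, whence $\bigabs{\abs{\eta}\zeta - \abs{\zeta}\eta} = 2\abs{\eta}\abs{\zeta}\sin(\theta/2) \le \abs{\eta}\abs{\zeta}\theta$. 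This is the classical massless null structure.

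For the mass correction, I use the identity $\angles{\xi}_m - \abs{\xi} = \frac{m^2}{\angles{\xi}_m + \abs{\xi}}$, which gives $0 \le \angles{\xi}_m - \abs{\xi} \le m$ for every $\xi$ (since $\angles{\xi}_m \ge m$). Hence the second term is bounded by $m\abs{\zeta} + m\abs{\eta} = m(\abs{\eta}+\abs{\zeta})$. Combining the two contributions yields the claimed bound. The only mildly subtle point—and what I'd flag as the main (small) obstacle—is handling the sign choices cleanly: one must verify that the substitution $\eta \mapsto \pm_1\eta$, $\zeta \mapsto \pm_2\zeta$ transforms $\mathfrak b_{(\pm_1,\pm_2)}(\eta,\zeta)$ into $\mathfrak b_{(+,+)}$ evaluated on the new variables (up to an overall sign), so that the reduction is legitimate; once that is observed the remainder of the argument is elementary.
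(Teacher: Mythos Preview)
Your argument is correct and essentially identical to the paper's: both reduce to $\pm_1=\pm_2=+$, split $\angles{\eta}_m\zeta-\angles{\zeta}_m\eta$ into the massless part $\abs{\eta}\zeta-\abs{\zeta}\eta$ and the $O(m)$ correction $(\angles{\cdot}_m-\abs{\cdot})$, and bound the massless part via $\bigabs{\abs{\eta}\zeta-\abs{\zeta}\eta}^2=2\abs{\eta}^2\abs{\zeta}^2(1-\cos\theta)$, while the $\mathfrak c$ estimate is handled in one line by the cross-product identity. Your treatment is in fact slightly more explicit (the half-angle formula and the rationalization $\angles{\xi}_m-\abs{\xi}=m^2/(\angles{\xi}_m+\abs{\xi})$), and the sign reduction you flag is indeed harmless since $\mathfrak b_{(\pm_1,\pm_2)}(\eta,\zeta)=\mathfrak b_{(+,+)}(\pm_1\eta,\pm_2\zeta)$ exactly.
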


\begin{proof}
Since $\abs{\mathfrak c_{(\pm_1,\pm_2)}(\eta,\zeta)}
=
\abs{\eta \times \zeta}
=
\abs{(\pm_1\eta) \times (\pm_2 \zeta)}
=
\abs{\eta}\abs{\zeta} \sin\theta(\pm_1\eta,\pm_2\zeta)$, we get \eqref{N:360}. For \eqref{N:350}, we may assume without loss of generality that $\pm_1=\pm_2=+$, and we write
\begin{align*}
  \angles{\eta}_m \zeta
  -
  \angles{\zeta}_m \eta
  &=
  \left(\sqrt{m^2+\abs{\eta}^2} - \abs{\eta}\right) \zeta
  + \abs{\eta}\zeta
  - \left(\sqrt{m^2+\abs{\zeta}^2} - \abs{\zeta}\right) \eta
  - \abs{\zeta}\eta.
\end{align*}
Taking absolute values, and noting that the two terms in parentheses are $O(m)$, we reduce to checking that $\bigabs{ \abs{\eta}\zeta - \abs{\zeta}\eta} \lesssim \abs{\eta}\abs{\zeta}\theta(\eta,\zeta)$. But this is clear, since the square of the left hand side is $2\abs{\eta}^2\abs{\zeta}^2 ( 1- \cos\theta(\eta,\zeta) )$.
\end{proof}

\section{Local well-posedness, part I}\label{L}

We first prove local existence for \eqref{N:205}, with unknowns $(\phi_+,\phi_-,A_+,A_-)$, so we need to express $\mathcal M(A,\phi)$ and $\mathcal N(A,\phi)$, defined as in \eqref{N:100}, entirely in terms of $(\phi_+,\phi_-,A_+,A_-)$ and their spatial derivatives (no time derivatives should appear, since \eqref{N:205} is first order in time). This can be achieved by writing $\phi=\phi_+ + \phi_-$ and $A=A_+ + A_-$, and replacing $\partial_t \phi$ by $i\angles{\nabla}_m(\phi_+-\phi_-)$. However, for the term $2i A_\mu \partial^\mu \phi$, appearing in $\mathcal M(A,\phi)$, we do something less obvious: Write it as $2i(P_1+P_2)$, where $P_2$ is the null form in \eqref{N:140}, and $P_1$ is expressed as in \eqref{N:210}; note that the latter relies explicitly on the assumption that we are in Lorenz gauge, and this assumption will eventually have to be justified (see the next section) when passing to the true M-K-G system. But in this section we consider only the resulting system of equations for $(\phi_+,\phi_-,A_+,A_-)$, which reads:
\begin{equation}\label{L:100}
\left\{
\begin{aligned}
  \left(i\partial_t \pm \angles{\nabla}_m\right) \phi_\pm
  &= - (\pm 2\angles{\nabla}_m)^{-1}
  \mathfrak M(\phi_+,\phi_-,A_+,A_-)
  \\
  \left(i\partial_t \pm \abs{\nabla}\right) A_\pm
  &=
  - (\pm 2\abs{\nabla})^{-1} \mathfrak N(\phi_+,\phi_-,A_+,A_-),
\end{aligned}
\right.
\end{equation}
where
\begin{equation}\label{L:110}
\left\{
\begin{aligned}
  \mathfrak M(\phi_+,\phi_-,A_+,A_-)
  &= 2 \sum_{\pm_1,\pm_2}
  \pm_2 \mathfrak A_{(\pm_1,\pm_2)}(A_{0,\pm_1}, \phi_{\pm_2})
  \\
  &\qquad\quad
  +
  2i( \Delta^{-1} \nabla [(\nabla \times \mathbf A)_l] \times \nabla \phi)^l
  + 
  A_\mu A^\mu \phi,
  \\
  \mathfrak N_0(\phi_+,\phi_-,A_+,A_-)
  &=
  \im \left[\phi i\angles{\nabla}_m
  \left( \overline{\phi_+}-\overline{\phi_-} \right) \right]
  -  A_0 \abs{\phi}^2,
  \\
  \mathfrak N_j(\phi_+,\phi_-,A_+,A_-)
  &=
  -
  \im \left( \phi\overline{ \partial_j \phi} \right)
  - \mathbf{A} \abs{\phi}^2
  \qquad \text{for $j=1,2,3$}.
\end{aligned}
\right.
\end{equation}
Here it is understood that $\phi = \phi_+ + \phi_-$ and $A = A_+ + A_-$, by definition. The initial data are
\begin{equation}\label{L:120}
\left\{
\begin{aligned}
  \phi_\pm\init
  &=
  \frac12 \left( \phi_0 \pm (i\angles{\nabla}_m)^{-1} \phi_1 \right)  \in H^1,
  \\
  A_\pm\init
  &=
  \frac12\left( a \pm (i\abs{\nabla})^{-1} \dot a \right)\in \dot H^1.
\end{aligned}
\right.
\end{equation}
We split $A_\pm$ into its homogeneous and inhomogeneous parts:
$$
  A_\pm = A_\pm^{(0)} + A_\pm^{\text{inh.}},
$$
where $\left(i\partial_t \pm \abs{\nabla}\right) A_\pm^{(0)} = 0$ with initial data as in \eqref{L:120}, whereas $A_\pm^{\text{inh.}}$ satisfies the second equation in \eqref{L:100}, but with zero initial data.

We shall use a contraction argument to prove local existence and uniqueness, and to this end we introduce $X^{s,b}$ spaces associated to the operators $\left(i\partial_t \pm \angles{\nabla}_m\right)$ and $\left(i\partial_t \pm \abs{\nabla}\right)$, whose symbols are, respectively, $-\tau \pm \angles{\xi}_m$ and $-\tau \pm \abs{\xi}$. However, these symbols are comparable in the sense that
\begin{equation}\label{L:130}
  \angles{-\tau \pm \angles{\xi}_m} \sim \angles{-\tau \pm \abs{\xi}},
\end{equation}
where $\angles{\cdot} = \sqrt{1+\abs{\cdot}^2}$, hence the $X^{s,b}$ spaces corresponding to the two operators are in fact identical. Note that \eqref{L:130} holds since $\angles{\xi}_m = \abs{\xi} + (\sqrt{m^2+\abs{\xi}^2} - \abs{\xi}) = \abs{\xi} + O(m)$.

\begin{definition}\label{L:Def}
For $s,b \in \R$, let $X_\pm^{s,b}$ be the completion of the Schwartz space $\mathcal{S}(\R^{1+3})$ with respect to the norm
$$
  \norm{u}_{X_\pm^{s,b}} = \bignorm{ \angles{\xi}^s
  \angles{-\tau\pm\abs{\xi}}^b \,\widehat u(\tau,\xi)}_{L^2_{\tau,\xi}},
$$
where $\widehat u(\tau,\xi)$ denotes the space-time Fourier transform of $u(t,x)$.
\end{definition}

For $T > 0$, let $X_\pm^{s,b}(S_T)$ denote the restriction space to $S_T = (-T,T) \times \R^3$. We recall the fact that
$$
  X_\pm^{s,b}(S_T) \hookrightarrow C([-T,T]; H^s)
  \qquad
  \text{for $b > \frac12$,}
$$
where $\hookrightarrow$ stands for continuous inclusion. Moreover, it is well-known that the linear initial value problem
$$
  \left(-i\partial_t \pm \abs{\nabla}\right) u = F,
  \qquad
  u\init=u_0,
$$
for given $F \in X_\pm^{s,b-1}(S_T)$ and $u_0 \in H^s$, any $s \in \R$ and $b > \frac12$, has a unique solution $u \in X_\pm^{s,b}(S_T)$, satisfying
\begin{equation}\label{L:140}
  \norm{u}_{X_\pm^{s,b}(S_T)} \le C_b(T) \left( \norm{u_0}_{H^s} + \norm{F}_{X_\pm^{s,b-1}(S_T)} \right),
\end{equation}
where $C_b$ is bounded as $T \to 0$. A proof of this, which applies to $X^{s,b}$ spaces in general, can be found in \cite{Kenig:1994}. Moreover (see, e.g., \cite{Selberg:2007d}), at the cost of a small loss of regularity for $F$, namely by replacing $\norm{F}_{X_\pm^{s,b-1}(S_T)}$ above by $\norm{F}_{X_\pm^{s,b-1+\varepsilon}(S_T)}$ for some small $\varepsilon > 0$, one can ensure that $C_b(T) = O(T^\varepsilon)$ as $T \to 0$, allowing one to deal with large initial data in a contraction mapping setup. Finally, in view of the estimate \eqref{L:130}, the same statements hold with $\abs{\nabla}$ replaced by $\angles{\nabla}_m$, i.e., for the equation $\left(-i\partial_t \pm \angles{\nabla}_m\right) u = F$.

We shall prove the following local existence and uniqueness result.

\begin{theorem}\label{L:Thm1} Given any initial condition
$$
  (\phi_+,\phi_-,A_+,A_-) \init \in H^1 \times H^1 \times \dot H^1 \times \dot H^1,
$$
there exists a $T > 0$, depending continuously on the data norm
$$
  N_0 = \norm{(\phi_+,\phi_-,A_+,A_-)\init}_{H^1 \times H^1 \times \dot H^1 \times \dot H^1},
$$
and there exists a solution $(\phi_+,\phi_-,A_+,A_-)$ of \eqref{L:100} on $S_T = (-T,T) \times \R^3$ satisfying the given initial condition. The solution has the regularity
\begin{align*}
  \phi_\pm &\in X_\pm^{1,b}(S_T) \subset C([-T,T],H^1),
  \\
  A_\pm^{(0)} &\in \abs{\nabla}^{-1} X_\pm^{0,\beta}(S_T)
  \subset C([-T,T],\dot H^1),
  \\
  A_\pm^{\text{inh.}} &\in X_\pm^{1-\delta,\beta}(S_T)
  \subset C([-T,T], H^{1-\delta}),
\end{align*}
for some $b,\beta > \frac12$ and any $\delta > 0$. The solution is unique in this regularity class, for $\delta > 0$ small enough.
\end{theorem}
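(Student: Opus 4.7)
The plan is to solve \eqref{L:100}--\eqref{L:120} by Picard iteration in a product of $X^{s,b}$-type spaces. For small $T > 0$ I would work on
\begin{align*}
Y_T = X_+^{1,b}(S_T) \times X_-^{1,b}(S_T) \times \mathcal A_+^T \times \mathcal A_-^T,
\end{align*}
where $\mathcal A_\pm^T$ is a direct-sum space whose elements decompose as $A_\pm = A_\pm^{(0)} + A_\pm^{\text{inh.}}$, with $A_\pm^{(0)} \in \abs{\nabla}^{-1} X_\pm^{0,\beta}(S_T)$ solving the free equation with the data \eqref{L:120} and $A_\pm^{\text{inh.}} \in X_\pm^{1-\delta,\beta}(S_T)$ having zero data; here $b, \beta$ are chosen slightly above $\tfrac12$ and $\delta > 0$ is small. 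The splitting is forced upon us because the Duhamel integral of $\mathfrak N$ cannot sit in the homogeneous space $\dot H^1$: after inverting $\abs{\nabla}$ it can only be controlled in an inhomogeneous Sobolev space, with the arbitrarily small regularity loss $\delta$. Local well-posedness then reduces, via the linear bound \eqref{L:140} with its $T^\varepsilon$-improvement, to showing that the Duhamel map $\Phi$ preserves and contracts a suitable ball in $Y_T$ for $T$ small depending on $N_0$.

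The heart of the proof is a list of bilinear and trilinear $X^{s,b}$ estimates for the right-hand sides of \eqref{L:100}. I separate these into three groups. First, the null-form bilinear estimates of the schematic form
\begin{align*}
\bignorm{\angles{\nabla}_m^{-1} \mathfrak A_{(\pm_1,\pm_2)}(A_{0,\pm_1},\phi_{\pm_2})}_{X_\pm^{1,b-1+\varepsilon}}
&\lesssim \norm{A_{0,\pm_1}}_{\abs{\nabla}^{-1} X^{0,\beta}} \norm{\phi_{\pm_2}}_{X^{1,b}},
\\
\bignorm{\angles{\nabla}_m^{-1} (\nabla w_l \times \nabla \phi)^l}_{X_\pm^{1,b-1+\varepsilon}}
&\lesssim \norm{\mathbf B}_{X^{0,\beta}} \norm{\phi}_{X^{1,b}},
\end{align*}
together with the corresponding estimates needed to close the $\mathfrak N_\mu$-equation. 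These would fail without null structure but succeed by exactly the margin given by Lemmas \ref{N:Lemma1} and \ref{N:Lemma2} and by \eqref{N:140}. The mechanism is standard: one converts the symbol bound $\abs{\mathfrak a_{(\pm_1,\pm_2)}(\eta,\zeta)} \lesssim m + \abs{\zeta}\theta(\pm_1\eta,\pm_2\zeta)$ into a weighted convolution inequality via the identity $\theta(\pm_1\eta,\pm_2\zeta)^2 \lesssim (\text{max hyperbolic weight})/\min(\abs{\eta},\abs{\zeta})$ and distributes the resulting $\tfrac12$-weight gain to one of the factors, reducing matters to known product estimates in wave-Sobolev spaces. The second group consists of the quadratic source $\im(\phi\,\overline{\partial\phi})$ in $\mathfrak N$, which has \emph{no} null structure; here the factor $\abs{\nabla}^{-1}$ in front of it and the extra room afforded by taking $A^{\text{inh.}}$ at regularity $1-\delta$ instead of $1$ are exactly what allows one to close the estimate via standard $H^{s_1}\cdot H^{s_2} \hookrightarrow H^{s_3}$-type product laws. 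The third group comprises the cubic terms $A_\mu A^\mu \phi$ and $A\abs{\phi}^2$, which carry no derivatives and are handled by combining $X^{s,b}$ Strichartz-type embeddings with the Sobolev embedding $\dot H^1 \hookrightarrow L^6$, together with the inclusion $\abs{\nabla}^{-1} H^s \hookrightarrow L^6 + H^{s+1}$ used to accommodate the splitting $A = A^{(0)} + A^{\text{inh.}}$.

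With these estimates in hand, the contraction closes in the usual way: the linear bound \eqref{L:140} produces a factor $C T^\varepsilon$ in front of the nonlinearity, which can be made arbitrarily small by choosing $T$ small depending on $N_0$. This gives existence, continuous dependence on the data, and uniqueness of the solution within the contraction space $Y_T$, from which the stated regularity and the embeddings into $C([-T,T];H^1)$ etc.\ follow from $b, \beta > \tfrac12$.

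The main technical obstacle, and indeed the point of this part of the paper, is the first group of estimates, and above all the null-form estimate for $\mathfrak A_{(\pm_1,\pm_2)}$, which controls the term $P_1$ that was not an issue in Coulomb gauge. Without the cancellation furnished by Lemma \ref{N:Lemma1} this estimate fails at the energy regularity, so the crux of the argument is the observation, established in Section \ref{N}, that $P_1$ carries the same kind of null structure as the Klainerman--Machedon term $P_2$; once that is known, both are controlled by essentially the same product estimates in wave-Sobolev spaces, and the contraction mapping setup proceeds routinely.
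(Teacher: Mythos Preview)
Your proposal is correct and follows essentially the same route as the paper: a Picard iteration in the split space with $A_\pm^{(0)} \in \abs{\nabla}^{-1} X_\pm^{0,\beta}$ and $A_\pm^{\text{inh.}} \in X_\pm^{1-\delta,\beta}$, reduced via \eqref{L:140} to null-form bilinear estimates for $\mathfrak A_{(\pm_1,\pm_2)}$ and the $P_2$-term (handled through Lemma~\ref{N:Lemma1}, the angle bound, and wave-Sobolev product estimates), the non-null bilinear term in $\mathfrak N$ (absorbed by $\abs{\nabla}^{-1}$ and the $\delta$-slack), and the cubic terms (H\"older plus Sobolev). One small correction: the paper takes $b = \tfrac12+\varepsilon$ but $\beta = 1-\delta$, not merely ``slightly above $\tfrac12$''; the extra hyperbolic regularity on $A$ is genuinely needed, since in the case where the angle estimate of Lemma~\ref{L:Lemma} lands half a hyperbolic weight on the $A$-factor, one must still have $\beta - \tfrac12$ close to $\tfrac12$ to close the resulting product estimate (cf.\ the fourth and fifth lines of \eqref{L:242}).
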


We use the obvious iteration scheme for \eqref{L:100}, and denote the sequence of iterates by $\bigl\{\phi_\pm^{(n)}, A_\pm^{(n)}\bigr\}_{n=0}^\infty$. The iterates $\phi_\pm^{(0)}, A_\pm^{(0)}$ are just the solutions of the homogeneous equations $\left(i\partial_t \pm \angles{\nabla}_m\right) \phi_\pm^{(0)} = 0$ and $\left(i\partial_t \pm \abs{\nabla}\right) A_\pm^{(0)} = 0$, with the given initial data, hence by the linear theory discussed after Definition \ref{L:Def}, we have $\phi^{(0)} \in X_\pm^{1,b}(S_T)$ and $A_\pm^{(0)} \in \abs{\nabla}^{-1}X_\pm^{0,\beta}(S_T)$ for every $T > 0$ and every $b,\beta > \frac12$. Moreover, assuming henceforth $T \le 1$, we have
$$
  \bignorm{\phi_\pm^{(0)}}_{X_\pm^{1,b}(S_T)} \lesssim N_0,
  \qquad
  \bignorm{\abs{\nabla}A_\pm^{(0)}}_{X_\pm^{0,\beta}(S_T)} \lesssim N_0,
$$
where $N_0$ is the initial data norm.

The subsequent iterates $\phi_\pm^{(n)}, A_\pm^{(n)}$, for $n \ge 1$, are defined by solving \eqref{L:100} with the superscripts $(n)$ and $(n-1)$ inserted on the left and right hand sides, respectively, and with the initial conditions given in Theorem \ref{L:Thm1}. For $A_\pm^{(n)}$ it is crucial that we split into the homogeneous and inhomogeneous parts:
$$
  A_\pm^{(n)} = A_\pm^{(0)} + A_\pm^{\text{inh.},(n)},
$$
where $A_\pm^{\text{inh.},(n)}$ is defined like $A_\pm^{(n)}$, but with zero initial data.

Applying the linear theory discussed after Definition \ref{L:Def}, then by a standard argument, Theorem \ref{L:Thm1} reduces to proving, for $\varepsilon > 0$ small enough, the nonlinear estimates
\begin{align}
  \label{L:200}
  \norm{\angles{\nabla}_m^{-1}
  \mathfrak M(\phi_+,\phi_-,A_+,A_-)}_{X_\pm^{1,b-1+\varepsilon}}
  \lesssim R^2 + R^3,
  \\
  \label{L:210}
  \norm{\abs{\nabla}^{-1}
  \mathfrak N(\phi_+,\phi_-,A_+,A_-)}_{X_\pm^{1-\delta,\beta-1+\varepsilon}}
  \lesssim R^2 + R^3,
\end{align}
for all $\phi_+,\phi_-,A_+,A_- \in \mathcal S(\R^{1+3})$, where
$$
  R = \sum_\pm \left[ \norm{\phi_\pm}_{X_\pm^{1,b}} + \min\left(\norm{\abs{\nabla}A_\pm}_{X_\pm^{0,\beta}},\norm{A_\pm}_{X_\pm^{1-\delta,\beta}}\right) \right].
$$
In particular, note that once we have proved this, then the analogous estimates with the norms restricted to $S_T$ follow immediately.

In addition to $X_\pm^{s,b}$, we shall make use of the wave-Sobolev space $H^{s,b}$, defined as the completion of $\mathcal S(\R^{1+3})$ with respect to the norm
$$
  \norm{u}_{H^{s,b}} = \bignorm{ \angles{\xi}^s
  \angles{\abs{\tau}-\abs{\xi}}^b \,\widehat u(\tau,\xi)}_{L^2_{\tau,\xi}}.
$$
Clearly,
\begin{alignat}{2}
  \label{L:220}
  \norm{u}_{H^{s,b}}
  &\le \norm{u}_{X_\pm^{s,b}}& &\qquad\text{for $b \ge 0$},
  \\
  \label{L:222}
  \norm{u}_{X_\pm^{s,b}}
  &\le \norm{u}_{H^{s,b}}& &\qquad\text{for $b \le 0$},
\end{alignat}
allowing us, in particular, to pass from estimates in $X_\pm^{s,b}$ to corresponding estimates in $H^{s,b}$, once the null structure (which depends on the signs) has been exploited.

In the following subsections we prove \eqref{L:200} and \eqref{L:210}, with
$$
  b = \frac12 + \varepsilon,
  \qquad
  \beta = 1-\delta,
$$
where $\varepsilon, \delta > 0$ are understood to be sufficiently small. That is, if we say that some estimate holds, we mean that it holds for $\varepsilon,\delta > 0$ small enough.
 
\subsection{The term $ \mathfrak A_{(\pm_1,\pm_2)}(A_{0,\pm_1}, \phi_{\pm_2})$}\label{L:224} For this term, \eqref{L:200} can be reduced to (here we use \eqref{L:222})
\begin{align}
  \label{L:230}
  I
  &\lesssim \norm{\abs{\nabla} u}_{X_{\pm_1}^{0,1-\delta}}
  \norm{v}_{X_{\pm_2}^{1,\frac12+\varepsilon}},
  \\
  \label{L:240}
  I
  &\lesssim
  \norm{u}_{X_{\pm_1}^{1-\delta,1-\delta}}
  \norm{v}_{X_{\pm_2}^{1,\frac12+\varepsilon}},
\end{align}
where
$$
  I
  =
  \norm{
  \iint
  \frac{\mathfrak a_{(\pm_1,\pm_2)}(\eta,\xi-\eta)}{\angles{\abs{\tau}-\abs{\xi}}^{\frac12-2\varepsilon}}
  \widehat u(\lambda,\eta)
  \widehat v(\tau-\lambda,\xi-\eta) \d\lambda \d\eta
  }_{L^2_{\tau,\xi}}
$$
and the symbol is given by \eqref{N:240}. We may assume without loss of generality that $\widehat u, \widehat v \ge 0$, and we put the symbol in absolute value. Now we apply Lemma \ref{N:Lemma1}, and estimate the angle using the following well-known fact related to the geometry of the null cone (a proof can be found, e.g., in \cite{Selberg:2008c}; see Lemma 2.1 there):

\begin{lemma}\label{L:Lemma}
Assume $s \in [0,\frac12]$. Then for all signs $(\pm_1,\pm_2)$, all $\lambda,\mu \in \R$ and all nonzero $\eta,\zeta \in \R^3$,
$$
  \theta(\pm_1\eta,\pm_2\zeta)
  \lesssim
  \left(
  \frac{\angles{\abs{\lambda+\mu}-\abs{\eta+\zeta}}}
  {\min(\angles{\eta},\angles{\zeta})}
  \right)^{s}
  +
  \left(
  \frac{\angles{-\lambda\pm_1\abs{\eta}}
  + \angles{-\mu\pm_2\abs{\zeta}}}
  {\min(\angles{\eta},\angles{\zeta})}
  \right)^{\frac12}.
$$
\end{lemma}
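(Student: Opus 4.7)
The lemma reduces cleanly to the single pointwise geometric bound
$$
  \theta(\pm_1\eta,\pm_2\zeta)^2
  \lesssim
  \frac{\bigangles{|\lambda+\mu|-|\eta+\zeta|}
  + \bigangles{-\lambda\pm_1|\eta|}
  + \bigangles{-\mu\pm_2|\zeta|}}{\min(\angles{\eta},\angles{\zeta})}.
$$
Once this is in hand, $\sqrt{u+v}\leq\sqrt u+\sqrt v$ gives the stated estimate for $s=\tfrac12$, and for $s<\tfrac12$ one argues by cases: if the ratio $\bigangles{|\lambda+\mu|-|\eta+\zeta|}/\min(\angles\eta,\angles\zeta)$ is $\leq 1$, raising it to a smaller power only enlarges it, so the $s=\tfrac12$ bound already implies the $s$-bound; otherwise that ratio raised to $s$ is $\geq 1\gtrsim\theta$. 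Before proving the displayed bound, note that if $\min(\angles\eta,\angles\zeta)=O(1)$ the right-hand side is already $\gtrsim 1$ (since every $\angles\cdot\geq 1$), so throughout we may assume $|\eta|,|\zeta|\gg 1$ and hence $\min(|\eta|,|\zeta|)\sim\min(\angles\eta,\angles\zeta)$.

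For the geometric core I would set $a=\pm_1\eta$, $b=\pm_2\zeta$, $\alpha=-\lambda\pm_1|\eta|$, $\beta=-\mu\pm_2|\zeta|$. The elementary identity $(|a|+|b|)^2-|a+b|^2=2|a||b|(1-\cos\theta(a,b))\sim|\eta||\zeta|\,\theta(a,b)^2$ (valid because $\theta(a,b)\in[0,\pi]$) reduces matters to estimating $(|\eta|+|\zeta|)^2-|a+b|^2$, and I then split on the signs. In the \emph{same-sign} case $\pm_1=\pm_2$: $|a+b|=|\eta+\zeta|$ and $\pm_1|\eta|\pm_2|\zeta|=\pm_1(|\eta|+|\zeta|)=\lambda+\mu+\alpha+\beta$, so $|\eta|+|\zeta|=|\lambda+\mu+\alpha+\beta|$, and two triangle inequalities give $(|\eta|+|\zeta|)-|\eta+\zeta|\leq \bigl||\lambda+\mu|-|\eta+\zeta|\bigr|+|\alpha|+|\beta|$. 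In the \emph{opposite-sign} case: $|a+b|=|\eta-\zeta|$ and $\pm_1|\eta|\pm_2|\zeta|=\pm(|\eta|-|\zeta|)$, so $\bigl||\eta|-|\zeta|\bigr|=|\lambda+\mu+\alpha+\beta|$; here the polarization identity
$$
  (|\eta|+|\zeta|)^2-|\eta-\zeta|^2 \;=\; |\eta+\zeta|^2-(|\eta|-|\zeta|)^2
$$
(both sides equal $2|\eta||\zeta|(1+\cos\theta(\eta,\zeta))$) recasts the ratio as $\bigl[|\eta+\zeta|^2-(|\eta|-|\zeta|)^2\bigr]/(|\eta||\zeta|)$, after which the same two triangle inequalities bound $|\eta+\zeta|-\bigl||\eta|-|\zeta|\bigr|\leq \bigl||\lambda+\mu|-|\eta+\zeta|\bigr|+|\alpha|+|\beta|$. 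In either case the companion factor is $\lesssim\max(|\eta|,|\zeta|)$, so dividing by $|\eta||\zeta|$ leaves $1/\min(|\eta|,|\zeta|)$ and yields the displayed bound.

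The main obstacle is the opposite-sign case: the naive factorization of $(|\eta|+|\zeta|)^2-|\eta-\zeta|^2$ produces the factor $|\eta|+|\zeta|-|\eta-\zeta|$, which can be as large as $2\min(|\eta|,|\zeta|)$ (when $\eta\approx\zeta$) and gives no usable gain. The polarization identity above is the decisive step, since it replaces that useless factor by $|\eta+\zeta|-\bigl||\eta|-|\zeta|\bigr|$, on which the constraint $\bigl||\eta|-|\zeta|\bigr|=|\lambda+\mu+\alpha+\beta|$ can finally be exploited. After that, everything is triangle inequalities and the interpolation step already described.
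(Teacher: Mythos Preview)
Your proof is correct. Note, however, that the paper does not actually prove this lemma: it simply quotes it as a well-known angle estimate and refers the reader to \cite{Selberg:2008c}, Lemma~2.1, for a proof. So there is no ``paper's own proof'' to compare against here.

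Your argument is a clean self-contained version of the standard proof. The reduction to the single $\theta^2$ bound, followed by the $s=\tfrac12$ case via $\sqrt{u+v}\le\sqrt u+\sqrt v$ and then the elementary monotonicity/boundedness dichotomy for $s<\tfrac12$, is exactly the right organization. The geometric core is handled correctly: in the same-sign case the factorization of $(|a|+|b|)^2-|a+b|^2$ together with $|\eta|+|\zeta|=|\lambda+\mu+\alpha+\beta|$ and two triangle inequalities does the job; in the opposite-sign case your use of the polarization identity
\[
  (|\eta|+|\zeta|)^2-|\eta-\zeta|^2 \;=\; |\eta+\zeta|^2-\bigl(|\eta|-|\zeta|\bigr)^2
\]
is precisely the key step that makes the constraint $\bigl||\eta|-|\zeta|\bigr|=|\lambda+\mu+\alpha+\beta|$ exploitable. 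Your observation that the naive factorization fails here, and why, shows you have identified the real content of the lemma. The initial reduction to $|\eta|,|\zeta|\gg 1$ is also correctly justified.
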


Applying this with $s=\frac12-2\varepsilon$, and using Lemma \ref{N:Lemma1}, we get $I \lesssim I_1 + I_2 + I_3 + I_4$, where
\begin{align*}
  I_1
  &=
  \norm{
  \iint
  \widehat u(\lambda,\eta)
  \widehat v(\tau-\lambda,\xi-\eta)
  \d\lambda \d\eta
  }_{L^2_{\tau,\xi}},
  \\
  I_2
  &=
  \norm{
  \iint
  \frac{\widehat u(\lambda,\eta)
  \abs{\xi-\eta}\widehat v(\tau-\lambda,\xi-\eta) }
  {\min(\angles{\eta},\angles{\xi-\eta})^{\frac12-2\varepsilon}}
  \d\lambda \d\eta
  }_{L^2_{\tau,\xi}},
  \\
  I_3
  &=
  \norm{
  \iint
  \frac{\angles{-\lambda\pm_1\abs{\eta}}^{\frac12}\widehat u(\lambda,\eta)
  \abs{\xi-\eta}\widehat v(\tau-\lambda,\xi-\eta) }
  {\angles{\abs{\tau}-\abs{\xi}}^{\frac12-2\varepsilon}
  \min(\angles{\eta},\angles{\xi-\eta})^{\frac12}}
  \d\lambda \d\eta
  }_{L^2_{\tau,\xi}}
  \\
  I_4
  &=
  \norm{
  \iint
  \frac{\widehat u(\lambda,\eta)
  \angles{-(\tau-\lambda)\pm_2\abs{\xi-\eta}}^{\frac12}
  \abs{\xi-\eta}\widehat v(\tau-\lambda,\xi-\eta) }
  {\angles{\abs{\tau}-\abs{\xi}}^{\frac12-2\varepsilon}
  \min(\angles{\eta},\angles{\xi-\eta})^{\frac12}}
  \d\lambda \d\eta
  }_{L^2_{\tau,\xi}}
\end{align*}
Using also \eqref{L:220}, we can thus reduce \eqref{L:240} to the estimates
\begin{equation}\label{L:242}
\left\{
\begin{aligned}
  \norm{uv}_{L^2}
  &\lesssim
  \norm{u}_{H^{1-\delta,1-\delta}}
  \norm{v}_{H^{1,\frac12+\varepsilon}}
  \\
  \norm{uv}_{L^2}
  &\lesssim
  \norm{u}_{H^{\frac32-2\varepsilon-\delta,1-\delta}}
  \norm{v}_{H^{0,\frac12+\varepsilon}}
  \\
  \norm{uv}_{L^2}
  &\lesssim
  \norm{u}_{H^{1-\delta,1-\delta}}
  \norm{v}_{H^{\frac12-2\varepsilon,\frac12+\varepsilon}}
  \\
  \norm{uv}_{L^2}
  &\lesssim
  \norm{u}_{H^{\frac32-2\varepsilon-\delta,\frac12-\delta}}
  \norm{v}_{H^{0,\frac12+\varepsilon}}
  \\
  \norm{uv}_{L^2}
  &\lesssim
  \norm{u}_{H^{1-\delta,\frac12-\delta}}
  \norm{v}_{H^{\frac12-2\varepsilon,\frac12+\varepsilon}}
  \\
  \norm{uv}_{H^{0,-\frac12+2\varepsilon}}
  &\lesssim
  \norm{u}_{H^{\frac32-2\varepsilon-\delta,1-\delta}}
  \norm{v}_{L^2}
  \\
  \norm{uv}_{H^{0,-\frac12+2\varepsilon}}
  &\lesssim
  \norm{u}_{H^{1-\delta,1-\delta}}
  \norm{v}_{H^{\frac12-2\varepsilon,0}}.
\end{aligned}
\right.
\end{equation}
These are of the general form
\begin{equation}\label{L:250}
   \norm{uv}_{H^{-s_0,-b_0}} \le C \norm{u}_{H^{s_1,b_1}}\norm{v}_{H^{s_2,b_2}},
\end{equation}
where $s_0,s_1,s_2,b_0,b_1,b_2 \in \R$.

\begin{definition}
If \eqref{L:250} holds for all $u,v \in \mathcal S(\R^{1+3})$, we say that the exponent matrix
$\Hproduct{s_0 & s_1 & s_2 \\ b_0 & b_1 & b_2}$ is a \emph{product}.
\end{definition}

Many estimates of the form \eqref{L:250} have appeared in the literature, but for a long time no systematic effort was made to determine necessary and sufficient conditions on $\Hproduct{s_0 & s_1 & s_2 \\ b_0 & b_1 & b_2}$ for it to be a product. With the recent work \cite{Selberg:2009a}, however, such conditions, sharp up to some endpoint cases, are available to us.

Note that if $\Hproduct{s_0 & s_1 & s_2 \\ b_0 & b_1 & b_2}$ is a product, then so is every permutation of its columns. Using this fact, we see that all the estimates in \eqref{L:242} hold, for $\varepsilon, \delta > 0$ small, by the following result, proved in \cite{Selberg:2009a}, about products of the form  $\Hproduct{s_0 & s_1 & s_2 \\ 0 & b_1 & b_2}$:

\begin{theorem}\label{L:Thm2} Set $b_0=0$ and assume
\begin{align}
  \label{L:301}
  &b_1, b_2 > 0
  \\
  \label{L:302}
  &b_1 + b_2 \ge \frac12
  \\
  \label{L:303}
  &s_0 + s_1 + s_2 \ge 2 - (b_1 + b_2)
  \\
  \label{L:304}
  &s_0 + s_1 + s_2 \ge \frac32 - b_1
  \\
  \label{L:305}
  &s_0 + s_1 + s_2 \ge \frac32 - b_2
  \\
  \label{L:306}
  &s_0 + s_1 + s_2 \ge 1
  \\
  \label{L:307}
  &s_0 + 2(s_1 + s_2) \ge \frac32  \\
  \label{L:308}
  &s_1 + s_2 \ge 0
  \\
  \label{L:309}
  &s_0 + s_2 \ge 0
  \\
  \label{L:310}
  &s_0 + s_1 \ge 0,
\end{align}
as well as the exceptions:
\begin{align}
  \label{L:311a}
  &\parbox[t]{10.8cm}{If $b_1=\frac12$, then~\eqref{L:303} and \eqref{L:305} must be strict.}
    \\
  \label{L:311b}
  &\parbox[t]{10.8cm}{If $b_1=\frac12$, then~\eqref{L:304} and \eqref{L:306} must be strict.}
  \\
  \label{L:312a}
  &\parbox[t]{10.8cm}{If $b_2=\frac12$, then~\eqref{L:303} and \eqref{L:304} must be strict.}
  \\
  \label{L:312b}
  &\parbox[t]{10.8cm}{If $b_2=\frac12$, then~\eqref{L:305} and \eqref{L:306} must be strict.}
  \\
  \label{L:315}
  &\parbox[t]{10.8cm}{If $b_1+b_2=1$, then~\eqref{L:303} and \eqref{L:306} must be strict.}
  \\
  \label{L:316}
  &\parbox[t]{10.8cm}{We require~\eqref{L:307} to be strict if $s_0$ takes one of the values $\frac12$, $\frac32$, $\frac32 - 2b_1$, $\frac32 - 2b_2$ or $\frac52 - 2(b_1+b_2)$.}
  \\ 
  \label{L:317}
  &\parbox[t]{10.8cm}{If one of~\eqref{L:303}--\eqref{L:306} is an equality, then~\eqref{L:308}--\eqref{L:310} must be strict.}
\end{align}
Then $\Hproduct{s_0 & s_1 & s_2 \\ 0 & b_1 & b_2}$ is a product.
\end{theorem}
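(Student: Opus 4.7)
The plan is to use duality together with a dyadic decomposition and bilinear $L^2$ estimates of Strichartz type. Since $b_0=0$, duality reduces the claim \eqref{L:250} to the trilinear inequality
$$
  \Abs{\int_{\R^{1+3}} u\, v\, w \d t \d x}
  \lesssim
  \norm{u}_{H^{s_1,b_1}} \norm{v}_{H^{s_2,b_2}} \norm{w}_{H^{s_0,0}}.
$$
Passing to Fourier variables and setting
$f_1(\tau,\xi) = \angles{\xi}^{s_1}\angles{\abs{\tau}-\abs{\xi}}^{b_1}\widehat u(\tau,\xi)$,
$f_2(\tau,\xi) = \angles{\xi}^{s_2}\angles{\abs{\tau}-\abs{\xi}}^{b_2}\widehat v(\tau,\xi)$, and
$f_3(\tau,\xi) = \angles{\xi}^{s_0}\widehat w(\tau,\xi)$, each of which may be taken to be a nonnegative $L^2$ function, the task becomes a weighted trilinear convolution bound on the hyperplane $(\tau_1+\tau_2+\tau_3,\xi_1+\xi_2+\xi_3)=0$. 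First I would perform a Littlewood--Paley decomposition, introducing dyadic parameters $N_i=\angles{\xi_i}$ and modulations $L_i=\angles{\abs{\tau_i}-\abs{\xi_i}}$. The constraint $\xi_1+\xi_2+\xi_3=0$ forces the two largest $N_i$ to be comparable, producing the standard high--high and high--low frequency regimes.

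Next, for each dyadic block I would invoke a core bilinear $L^2$ estimate of the form
$$
  \norm{u_i u_j}_{L^2_{t,x}} \lesssim C(N_i,N_j,N_k,L_i,L_j) \norm{u_i}_{L^2} \norm{u_j}_{L^2},
$$
where $C$ reflects the geometry of the joint Fourier support on the characteristic cone; a representative gain is $(N_{\min}\, L_i L_j)^{1/2}$, refined in transverse situations to $(N_{\min}\, L_i \min(L_j,N_{\max}))^{1/2}$ in the spirit of Tao-type bilinear restriction. Pairing $u_i u_j$ with the third factor via Cauchy--Schwarz, the estimate on each dyadic block reduces to a product of powers of $N_i$ and $L_i$ that then has to be summed.

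The conditions \eqref{L:301}--\eqref{L:310} emerge as the precise requirements for this dyadic summation to converge. Heuristically, \eqref{L:301}--\eqref{L:302} control the sum in the modulation variables; \eqref{L:303}--\eqref{L:306} correspond to the four subcases dictated by which of $L_0,L_1,L_2$ dominates in the resonance relation $L_{\max}\gtrsim \bigabs{\pm\abs{\xi_1}\pm\abs{\xi_2}\pm\abs{\xi_3}}$ in the high--high regime; \eqref{L:307} reflects a Knapp-type concentration of all three waves on a thin cap near the light cone, which is the extremal configuration when all modulations are small; and \eqref{L:308}--\eqref{L:310} prevent low-frequency blowups driven by plane-wave test functions.

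The main obstacle is the endpoint case analysis \eqref{L:311a}--\eqref{L:317}: at each boundary equality the naive dyadic sum diverges logarithmically, and a matching strictness in a neighboring condition is required to close the estimate, since otherwise the inequality genuinely fails (as can be seen by fine-tuning the relevant Knapp or plane-wave example). These borderline cases have to be handled by combining two distinct bilinear inputs within a single dyadic block and carefully tracking the resulting double sum. I would therefore follow the scheme of \cite{Selberg:2009a}, where the full case analysis together with the counterexamples establishing sharpness of \eqref{L:301}--\eqref{L:317} is carried out.
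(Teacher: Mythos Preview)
The paper does not prove Theorem~\ref{L:Thm2}; it is quoted from \cite{Selberg:2009a} and used as a black box, so there is no proof in the paper to compare against. Your sketch of the dyadic decomposition plus bilinear $L^2$ cone estimates is a reasonable outline of how such product theorems are established, and you correctly identify at the end that the actual case analysis and endpoint handling are carried out in \cite{Selberg:2009a}. Since the paper's ``proof'' is simply the citation, your proposal is in that sense aligned with the paper: both defer the details to the same reference.
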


We have proved \eqref{L:240}, and clearly \eqref{L:230} follows by the same argument except when $u$ has spatial Fourier support in the region $\abs{\xi} \le 1$. But if this is the case, then by the $L^p$ inequality for potentials on $\R^3$ we have
\begin{equation}\label{L:318}
  \norm{u(t)}_{L^\infty} \lesssim \norm{(1-\Delta)^{\frac14+\varepsilon}u(t)}_{L^6} \lesssim \norm{(1-\Delta)^{\frac14+\varepsilon}\abs{\nabla} u(t)}_{L^2} \sim \norm{\abs{\nabla} u(t)}_{L^2},
\end{equation}
so we can simply estimate
$$
  I \lesssim \norm{u \abs{\nabla} v}_{L^2} \lesssim \norm{u}_{L^\infty} \norm{\abs{\nabla} v}_{L^2} \lesssim \norm{\abs{\nabla} u}_{L_t^\infty L_x^2}  \norm{\abs{\nabla} v}_{L^2} 
$$
and use the fact that $X_\pm^{0,\frac12+\varepsilon} \hookrightarrow L_t^\infty L_x^2$. This completes the proof of \eqref{L:230}.

\subsection{The term $( \Delta^{-1} \nabla [(\nabla \times \mathbf A)_l] \times \nabla \phi)^l$} After splitting $\mathbf A = \mathbf A_+ + \mathbf A_-$ and $\phi=\phi_+ + \phi_-$, the symbol satisfies (this follows as is in the proof of \eqref{N:360}) an estimate like the one in Lemma \ref{N:Lemma1}, but without the $O(m)$ term, and with only one power of the angle. Therefore, the estimates in subsection \ref{L:224} apply.

\subsection{The term $A_\mu A^\mu \phi$} For this term, \eqref{L:200} reduces to (here we use again \eqref{L:220} and \eqref{L:222})
\begin{align}
  \label{L:300}
  \norm{u^2 v}_{L^2}
  &\lesssim
  \norm{\abs{\nabla}u}_{H^{0,\frac12+\varepsilon}}^2
  \norm{v}_{H^{1,\frac12+\varepsilon}}
  \\
  \label{L:310}
  \norm{u^2 v}_{L^2}
  &\lesssim
  \norm{u}_{H^{1-\delta,\frac12+\varepsilon}}^2
  \norm{v}_{H^{1,\frac12+\varepsilon}}.
\end{align}
For \eqref{L:300} use H\"older's inequality and the embedding $\dot H^1 \hookrightarrow L^6$ in $x$, as well as $H^{0,\frac12+\varepsilon} \hookrightarrow L_t^\infty L_x^2$. To prove \eqref{L:310}, we use Theorem \ref{L:Thm2} twice, obtaining
$$
  \norm{u^2 v}_{L^2}
  \lesssim
  \norm{u^2}_{H^{\frac12+\varepsilon,0}}
  \norm{v}_{H^{1,\frac12+\varepsilon}}
  \lesssim
  \norm{u}_{H^{1-\delta,\frac12+\varepsilon}}^2
  \norm{v}_{H^{1,\frac12+\varepsilon}}.
$$

\subsection{The terms $\phi_{\pm_1}\angles{\nabla}_m \overline{\phi_{\pm_2}}$ and $\phi\overline{ \partial_j \phi}$} For these terms, \eqref{L:210} reduces to (using again \eqref{L:220} and \eqref{L:222})
\begin{equation}\label{L:320}
  \norm{\abs{\nabla}^{-1}(uv)}_{H^{1-\delta,0}}
  \lesssim
  \norm{u}_{H^{1,\frac12+\varepsilon}}
  \norm{v}_{H^{0,\frac12+\varepsilon}}.
\end{equation}

First, if the spatial Fourier support of $uv$ is contained in the region $\abs{\xi} \le 1$, then we can replace the left hand side by $\norm{\abs{\nabla}^{-1}(uv)}_{L^2}$, and using the $L^p$ inequality for potentials in $x$ we get
$$
  \norm{\abs{\nabla}^{-1}(uv)}_{L^2}
  \lesssim
  \norm{uv}_{L_t^2 L_x^{\frac65}}
  \le
  \norm{u}_{L_t^2 L_x^3}
  \norm{v}_{L_t^\infty L_x^{2}}
  \lesssim
  \norm{u}_{H^{\frac12,0}}
  \norm{v}_{H^{0,\frac12+\varepsilon}}.
$$

If, on the other hand, $\abs{\xi} > 1$ in the spatial Fourier support of $uv$, then we can replace the left hand side of \eqref{L:320} by $\norm{uv}_{H^{-\delta,0}}$, and the desired estimate holds by Theorem \ref{L:Thm2}.

\subsection{The term $A \abs{\phi}^2$} Then \eqref{L:210} reduces to, splitting into $\abs{\xi} \le 1$ and $\abs{\xi} > 1$ as in the previous subsection,
\begin{align}
  \label{L:330}
  \norm{u^2 v}_{L_t^2 L_x^{\frac65}}
  &\lesssim
  \norm{u}_{H^{1,\frac12+\varepsilon}}^2
  \norm{\abs{\nabla} v}_{H^{0,\frac12+\varepsilon}},
  \\
  \label{L:340}
  \norm{u^2 v}_{L_t^2 L_x^{\frac65}}
  &\lesssim
  \norm{u}_{H^{1,\frac12+\varepsilon}}^2
  \norm{v}_{H^{1-\delta,\frac12+\varepsilon}},
  \\
  \label{L:350}
  \norm{u^2 v}_{L^2}
  &\lesssim
  \norm{u}_{H^{1,\frac12+\varepsilon}}^2
  \norm{\abs{\nabla} v}_{H^{0,\frac12+\varepsilon}},
  \\
  \label{L:360}
  \norm{u^2 v}_{L^2}
  &\lesssim
  \norm{u}_{H^{1,\frac12+\varepsilon}}^2
  \norm{v}_{H^{1-\delta,\frac12+\varepsilon}}.
\end{align}

In fact, \eqref{L:330} holds by a large margin: Estimate the left hand side by  the product of $\norm{u}_{L_t^2 L_x^3}$, $\norm{u}_{L_t^\infty L_x^3}$ and $\norm{v}_{L_t^\infty L_x^{6}}$, and use the embeddings $\dot H^{\frac12} \hookrightarrow L_x^3$, $\dot H^1 \hookrightarrow L_x^6$ and $H^{0,\frac12+\varepsilon} \hookrightarrow L_t^\infty L_x^2$ to get
\begin{equation}\label{L:370}
  \norm{u^2 v}_{L_t^2 L_x^{\frac65}}
  \lesssim
  \norm{u}_{H^{\frac12,\frac12+\varepsilon}}
  \norm{u}_{H^{\frac12,\frac12+\varepsilon}}
  \norm{\abs{\nabla} v}_{H^{0,\frac12+\varepsilon}}.
\end{equation}
A similar argument gives \eqref{L:340}. For \eqref{L:350} we estimate by the product of $\norm{u}_{L_t^2 L_x^6}$, $\norm{u}_{L_t^\infty L_x^6}$ and $\norm{v}_{L_t^\infty L_x^{6}}$. Finally, for \eqref{L:360} we use Theorem \ref{L:Thm2} twice, obtaining
$$
  \norm{u^2 v}_{L^2}
  \lesssim
  \norm{u^2}_{H^{\frac12+\varepsilon+\delta,0}}
  \norm{v}_{H^{1-\delta,\frac12+\varepsilon}}
  \lesssim
  \norm{u}_{H^{1,\frac12+\varepsilon}}^2
  \norm{v}_{H^{1-\delta,\frac12+\varepsilon}}.
$$

This concludes the proof of Theorem \ref{L:Thm1}.

\section{Local well-posedness, part II}\label{LL}

The local well-posedness result in Theorem \ref{L:Thm1}, for a modified system, was proved for arbitary data. In this section we prove that if the data have the form \eqref{L:120}, with $\phi_0$, $\phi_1$, $a$ and $\dot a$ as in Theorem \ref{M:Thm}, then we have in fact a solution of the true M-K-G system in Lorenz gauge:

\begin{theorem}\label{LL:Thm}
Consider the solution $(\phi_+,\phi_-,A_+,A_-)$ of \eqref{L:100} on the time-slab $S_T = (-T,T) \times \R^3$, obtained in Theorem \ref{L:Thm1}, but assume now that the initial data are given by \eqref{L:120}, where $\phi_0$, $\phi_1$, $a$ and $\dot a$ are as in Theorem \ref{M:Thm}. Then defining $\phi = \phi_+ + \phi_-$ and $A = A_+ + A_-$, we have a local solution $(\phi,A)$ of \eqref{A:104} on $S_T$, with data $(\phi,\partial_t \phi) \init = (\phi_0,\phi_1)$ and $(A,\partial_t A) \init = (a,\dot a)$.

Moreover, defining $\mathbf E$ and $\mathbf B$ as in \eqref{A:6}, we obtain a local solution with the properties described in Theorem \ref{M:Thm}, but with the time $t$ restricted to $(-T,T)$, and excluding uniqueness.
\end{theorem}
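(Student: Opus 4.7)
The plan is to translate the solution $(\phi_+,\phi_-,A_+,A_-)$ of \eqref{L:100} back to the variables $\phi = \phi_+ + \phi_-$, $A = A_+ + A_-$ and verify that this yields a solution of \eqref{A:104}. Taking appropriate linear combinations of the $+$ and $-$ equations and using $\partial_t\phi = i\angles{\nabla}_m(\phi_+ - \phi_-)$, $\partial_t A = i\abs{\nabla}(A_+ - A_-)$ produces $(\square - m^2)\phi = \mathfrak M$ and $\square A_\mu = \mathfrak N_\mu$. Direct inspection of \eqref{L:110} (using $\overline{\partial_t\phi} = -i\angles{\nabla}_m(\overline{\phi_+}-\overline{\phi_-})$) shows $\mathfrak N_\mu = \mathcal N_\mu(A,\phi) = -J_\mu$, while the identity $\nabla\cdot\mathbf A = \partial_t A_0 + L$, where $L := \partial^\mu A_\mu$, lets us rewrite
\[
  \mathfrak M = \mathcal M(A,\phi) - 2i\Delta^{-1}\nabla L \cdot \nabla\phi.
\]
The initial conditions $(\phi,\partial_t\phi)\init = (\phi_0,\phi_1)$ and $(A,\partial_t A)\init = (a,\dot a)$ follow immediately from \eqref{L:120}, and the remaining equation $D^{(A)}_\mu D^{(A)\mu}\phi = m^2\phi$ will hold once we establish $L \equiv 0$.

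I first verify that the Cauchy data of $L$ vanish. From the initial gauge choice $a_0 = \dot a_0 = 0$ and $\nabla\cdot\mathbf a = 0$ we get $L\init = 0$; and from $\partial_t L = -\partial_t^2 A_0 + \nabla\cdot\partial_t\mathbf A$ together with $\square A_0 = \mathcal N_0 = \rho$ (giving $\partial_t^2 A_0\init = -\rho_0$) and $\nabla\cdot\dot{\mathbf a} = -\nabla\cdot\mathbf E_0 = -\rho_0$ (from \eqref{M:140} and \eqref{M:220}), we get $\partial_t L\init = 0$. To derive an evolution equation for $L$, apply $\partial^\mu$ to $\square A_\mu = -J_\mu$, obtaining $\square L = -\partial^\mu J_\mu$. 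The gauge-covariant current identity $\partial^\mu J_\mu = \im\bigl(\phi\overline{D^{(A)}_\mu D^{(A)\mu}\phi}\bigr)$ combined with the expansion $D^{(A)}_\mu D^{(A)\mu}\phi = \square\phi - iL\phi - 2iA^\mu\partial_\mu\phi - A_\mu A^\mu\phi$ and the modified KG relation above gives
\[
  D^{(A)}_\mu D^{(A)\mu}\phi - m^2\phi = -iL\phi - 2i\Delta^{-1}\nabla L \cdot \nabla\phi,
\]
and then, using that $L$ is real and $2\re(\phi\overline{\nabla\phi}) = \nabla(\abs{\phi}^2)$,
\[
  \square L + L\abs{\phi}^2 + \Delta^{-1}\nabla L \cdot \nabla(\abs{\phi}^2) = 0,
\]
a linear wave equation for $L$ with zero Cauchy data.

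The main obstacle will be to conclude $L \equiv 0$ from this equation in the low-regularity class where $A \in C(\dot H^1 + H^{1-\delta})$ and $\partial_t A \in C(H^{-\delta})$, so that $L$ is only a distribution. My plan is to bypass this via approximation, exploiting the continuous dependence and persistence of higher regularity noted at the end of Section \ref{M}: approximate the finite-energy data by smooth compactly supported data respecting the linear constraint \eqref{M:220} (imposable after smoothing by adjusting the curl-free parts), apply Theorem \ref{L:Thm1} to obtain smooth approximate solutions on a common time slab, conclude $L^{(n)} \equiv 0$ via classical uniqueness for the smooth linear wave equation, and pass to the limit, noting that the linear Lorenz condition is preserved under distributional convergence.

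Finally, with $L \equiv 0$ in hand, define $\mathbf E,\mathbf B$ by \eqref{A:6}. Then $\nabla\cdot\mathbf B = 0$ and $\nabla\times\mathbf E + \partial_t\mathbf B = 0$ are identities, while direct computation gives $\nabla\cdot\mathbf E = \square A_0 - \partial_t L = \rho$ and $\nabla\times\mathbf B - \partial_t\mathbf E = -\square\mathbf A + \nabla L = \mathbf J$, verifying \eqref{A:2}. The regularity statements for $\phi, \mathbf E, \mathbf B$ in Theorem \ref{M:Thm} are inherited from the $X^{s,b}$ regularity of $(\phi_\pm, A_\pm)$ provided by Theorem \ref{L:Thm1}, while energy conservation and unconditional uniqueness are deferred to later sections.
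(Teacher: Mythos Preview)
Your treatment of the Lorenz condition is essentially the paper's own: both derive the same linear wave equation for $L=\partial^\mu A_\mu$ (the paper writes it as $\square u = 2\re(\phi\overline{\nabla\phi})\cdot\Delta^{-1}\nabla u + |\phi|^2 u$, which is your equation after using $\nabla|\phi|^2 = 2\re(\phi\overline{\nabla\phi})$), both verify the Cauchy data vanish from the initial gauge choice and the constraint \eqref{M:220}, and both invoke regularization plus persistence of higher regularity to justify the uniqueness argument for smooth solutions and then pass to the limit.

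There is, however, a genuine gap in your last paragraph. The claim that $\mathbf E,\mathbf B \in C([-T,T];L^2)$ is ``inherited from the $X^{s,b}$ regularity of $(\phi_\pm,A_\pm)$ provided by Theorem~\ref{L:Thm1}'' is not correct. Theorem~\ref{L:Thm1} gives $A^{\text{inh.}}_\pm \in X_\pm^{1-\delta,\beta}(S_T) \subset C(H^{1-\delta})$ only, so $\nabla\times\mathbf A$ and $\nabla A_0 - \partial_t\mathbf A$ land, a priori, merely in $C(L^2 + H^{-\delta})$, not $C(L^2)$. The small loss $\delta>0$ in the potential cannot be undone at the level of $A$; this is exactly the ``apparent loss of regularity'' the paper flags after Theorem~\ref{M:Thm}. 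To recover $\mathbf E,\mathbf B \in C(L^2)$, the paper works directly with the wave equations \eqref{N:292} for $\mathbf E,\mathbf B$, uses the null structure of the bilinear terms there (the decompositions \eqref{N:300}--\eqref{N:310} and the symbol bounds of Lemma~\ref{N:Lemma2}), and reduces to a list of $H^{s,b}$ product estimates checked via Theorem~\ref{L:Thm2}. This is a substantive step which your proposal omits.

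A smaller point: energy conservation is part of what Theorem~\ref{LL:Thm} asserts (it is among ``the properties described in Theorem~\ref{M:Thm}''), and the paper establishes it here, at the end of Section~\ref{LL}, by the same approximation-by-smooth-solutions device. You should not defer it.
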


For the proof of uniqueness, see section \ref{U}.

We now prove Theorem \ref{LL:Thm}. Since the right hand sides of the equations for $\phi_+$ and $\phi_-$ in \eqref{L:100} sum to zero, it is clear that $\phi=\phi_+ + \phi_-$ has time derivative equal to $i\angles{\nabla}_m(\phi_+ - \phi_-)$, and similarly $\partial_t A = i\abs{\nabla}(A_+ - A_-)$. Now it follows that $\mathfrak N$ in \eqref{L:110} is the same as $\mathcal N$ in \eqref{N:100}, and since $\square = (i\partial_t + \abs{\nabla})(i\partial_t - \abs{\nabla}) = (i\partial_t - \abs{\nabla})(i\partial_t + \abs{\nabla})$, it follows immediately that $A = A_+ + A_-$ satisfies the second equation in \eqref{N:100}, which is the same as the second equation in \eqref{A:104}. Moreover, it is clear from \eqref{L:120} that $(\phi,\partial_t \phi) \init = (\phi_0,\phi_1)$ and $(A,\partial_t A) \init = (a,\dot a)$.

Next, we show that the Lorenz condition $\partial^\mu A_\mu = 0$ holds. By regularization of the data, and the fact that the solutions obtained in Theorem \ref{L:Thm1} enjoy continuous dependence on the data and persistence of higher regularity, we may assume that $\phi \in C_c^\infty([-T,T] \times \R^3)$ and $\partial A \in C^\infty([-T,T] \times \R^3)$. Now define $u = \partial^\mu A_\mu$. Write $u = u_+ + u_-$, where $u_\pm = -\partial_t A_{0,\pm} + \nabla \cdot \mathbf A_\pm$. Then by the second equation in \eqref{L:100},
$$
  \left(i\partial_t \pm \abs{\nabla}\right) u_\pm
  =
  - (\pm 2\abs{\nabla})^{-1} \mathfrak R(A,\phi),
$$
where
\begin{align*}
  \mathfrak R(A,\phi)
  &=
  \im \left( \phi\angles{\nabla}_m
  \left(
  -\overline{i\partial_t\phi_+}+\overline{i\partial_t\phi_-}
  \right)
  \right)
  + \im \left( \phi\overline{\Delta\phi} \right)
  + \partial^\mu (A_\mu \abs{\phi}^2)
  \\
  &=
  \im \left( \phi\angles{\nabla}_m
  \left(-
  \overline{[i\partial_t+\angles{\nabla}_m]\phi_+}
  +
  \overline{[i\partial_t- \angles{\nabla}_m]\phi_-}
  \right) \right)
  + \partial^\mu (A_\mu \abs{\phi}^2)
  \\
  &=
  \im \left( \phi \,
  \overline{\mathfrak M(\phi_+,\phi_-,A_+,A_-)}
  \right)
  + A_\mu \partial^\mu (\abs{\phi}^2)
  + \abs{\phi}^2 u.
\end{align*}
Here we used $\Delta\phi
= m^2 \phi-\angles{\nabla}_m 
( \angles{\nabla}_m \phi_+ + \angles{\nabla}_m\phi_-)$ in the second step, and in the last step we used the first equation in \eqref{L:100}.

By the definition \eqref{N:220} and the fact that $\partial_t \phi = i\angles{\nabla}_m(\phi_+ - \phi_-)$ and $\partial_t A = i\abs{\nabla}(A_+ - A_-)$, we know that the second equality in \eqref{N:210} is valid. Using this fact and the identity \eqref{N:140}, we find
$$
  \mathfrak M(\phi_+,\phi_-,A_+,A_-)
  = 2i \left( - A_0 \partial_t \phi + \Delta^{-1} \nabla \partial_t A_0 \cdot \nabla \phi
  + \mathbf A^{\text{df}} \cdot \nabla \phi \right)
  +
  A_\mu A^\mu \phi.
$$
But by definition, $\partial_t A_0 = \nabla \cdot \mathbf A - u$. Using also the identity \eqref{N:110}, we get
$$
  \mathfrak M(\phi_+,\phi_-,A_+,A_-)
  = 2i \left( A_\mu \partial^\mu \phi 
  - \Delta^{-1} \nabla u \cdot \nabla \phi
  \right)
  +
  A_\mu A^\mu \phi.
$$
After simplification we then find
$$
  \mathfrak R(A,\phi)
  =
  2 \re (\phi \overline{\nabla \phi}) \cdot \Delta^{-1} \nabla u
  + \abs{\phi}^2 u.
$$
Therefore,
$$
  \square u =
  2 \re (\phi \overline{\nabla \phi}) \cdot \Delta^{-1} \nabla u
  + \abs{\phi}^2 u.
$$
Since this equation is linear in $u$, and since $\phi \in C_c^\infty([-T,T] \times \R^3)$, uniqueness holds. But by the construction of the data $(a,\dot a)$ in Theorem \ref{M:Thm}, $u \init = \partial_t u \init = 0$, hence $u$ vanishes on $S_T$, so the Lorenz condition $\partial^\mu A_\mu = 0$ is indeed verified there.

With this information in hand, it follows immediately that $\mathfrak M$ in \eqref{L:110} is the same as $\mathcal M$ in \eqref{N:100}, and moreover, the first equation in \eqref{N:100} is the same as the first equation in \eqref{A:104}. This concludes the proof of the first part of Theorem \ref{LL:Thm}.

It remains to prove that $\mathbf E$ and $\mathbf B$, defined by \eqref{A:6}, are continuous in time with values in $L^2$, for $t \in [-T,T]$. For this, it suffices to show $\mathbf E, \mathbf B \in H^{0,\frac12+\varepsilon}(S_T)$. By well-known linear estimates (see, e.g., \cite{Selberg:2002b}) this reduces to checking that
\begin{gather}
  \label{LL:100}
  \mathbf E \init, \mathbf B \init \in L^2
  \qquad \text{and} \qquad
  \partial_t \mathbf E \init, \partial_t \mathbf B \init \in H^{-1},
  \\
  \label{LL:110}
  \square E, \square B \in H^{-1,-\frac12+\varepsilon}(S_T).
\end{gather}

The left member of \eqref{LL:100} holds by assumption \eqref{M:210}, whereas to show the right member we use also Maxwell's equations \eqref{A:2}: $\partial_t \mathbf B \init = - \nabla \times \mathbf E \init \in H^{-1}$ and $\partial_t \mathbf E \init = \nabla \times \mathbf B \init - \mathbf J \init \in H^{-1}$, where $\mathbf J \init \in H^{-1}$ follows by estimating the two terms $\im(\phi_0\overline{\nabla\phi_0})$ and $\abs{\phi_0}^2\mathbf a$; the first term can be estimated as in Remark \ref{A:Rem3}, and the cubic term is in fact in $L^2$, in view of the embedding $\dot H^1 \hookrightarrow L_x^6$.

To prove \eqref{LL:110}, we use \eqref{N:292}. For the bilinear terms, we apply \eqref{N:300} and \eqref{N:310}, as well as the null symbol estimates in Lemma \ref{N:Lemma2}. Proceeding as in subsection \ref{L:224}, we then reduce to the following bilinear estimates:
$$
\begin{aligned}
  \norm{uv}_{H^{-1,0}}
  &\lesssim
  \norm{u}_{H^{0,\frac12+\varepsilon}}
  \norm{v}_{H^{1,\frac12+\varepsilon}}
  \\
  \norm{uv}_{H^{-1,0}}
  &\lesssim
  \norm{u}_{H^{\frac12-\varepsilon,\frac12+\varepsilon}}
  \norm{v}_{H^{0,\frac12+\varepsilon}}
  \\
  \norm{uv}_{H^{-1,-\frac12+\varepsilon}}
  &\lesssim
  \norm{u}_{H^{\frac12,0}}
  \norm{v}_{H^{0,\frac12+\varepsilon}}
  \\
  \norm{uv}_{H^{-1,-\frac12+\varepsilon}}
  &\lesssim
  \norm{u}_{H^{\frac12,\frac12+\varepsilon}}
  \norm{v}_{L^2}.
\end{aligned}
$$
all of which hold for small enough $\varepsilon > 0$, by Theorem \ref{L:Thm2}.

Now consider the cubic terms in \eqref{N:292}. The terms $\nabla (A_0\abs{\phi}^2)$ and $\nabla\times (\mathbf A \abs{\phi}^2)$ are covered by \eqref{L:350} and \eqref{L:360}, leaving $\partial_t(\mathbf A\abs{\phi}^2)$. Since $\partial_t \phi = i\angles{\nabla}_m(\phi_+ - \phi_-)$ and $\partial_t A = i\abs{\nabla}(A_+ - A_-)$, we reduce to estimating terms of the schematic form $\phi^2\abs{\nabla} A$ and $A \phi \abs{\nabla} \phi$. But since our norms depend only the absolute value of the Fourier transform, we can reduce (by the triangle inequality in Fourier space) to $\abs{\nabla}(\phi^2 A)$ and $A \phi \abs{\nabla}\phi$; since the first of these is covered by \eqref{L:350} and \eqref{L:360}, it suffices to check the following:
$$
\begin{aligned}
  \norm{uvw}_{H^{-1,-\frac12+\varepsilon}}
  &\lesssim
  \norm{u}_{H^{1-\delta,\frac12+\varepsilon}}
  \norm{v}_{H^{1,\frac12+\varepsilon}}
  \norm{w}_{H^{0,\frac12+\varepsilon}}
  \\
  \norm{uvw}_{H^{-1,0}}
  &\lesssim
  \norm{\abs{\nabla} u}_{H^{0,\frac12+\varepsilon}}
  \norm{v}_{H^{1,\frac12+\varepsilon}}
  \norm{w}_{H^{0,\frac12+\varepsilon}}.
\end{aligned}
$$
The last of these follows from \eqref{L:370}, since $L^{\frac65}_x \hookrightarrow H^{-1}$, whereas the first one follows by two applications of Theorem \ref{L:Thm2}:
$$
  \norm{uvw}_{H^{-1,-\frac12+\varepsilon}}
  \lesssim
  \norm{u}_{H^{1-\delta,\frac12+\varepsilon}}
  \norm{vw}_{H^{-\delta,0}}
  \lesssim
  \norm{u}_{H^{1-\delta,\frac12+\varepsilon}}
  \norm{v}_{H^{1,\frac12+\varepsilon}}
  \norm{w}_{H^{0,\frac12+\varepsilon}}.
$$
This concludes the proof that $\mathbf E,\mathbf B \in C([-T,T];L^2)$.

Finally, we note that by continuous dependence on the data and persistence of higher regularity, the solution $(\phi,\mathbf E, \mathbf B)$  is a limit, in $C([-T,T];H^1 \times L^2 \times L^2)$, of smooth solutions with $C_c^\infty$ data. This further implies convergence in energy norm, since, for small $\delta > 0$,
$$
  \bignorm{D^{(A)}\phi(t)}_{L^2}
  \le \norm{\nabla \phi(t)}_{L^2}
  + C\norm{A(t)}_{\dot H^1 + H^{1-\delta}}
  \norm{\phi(t)}_{H^1}.
$$
In particular, therefore, energy conservation holds for our solutions, since it holds for the smooth solutions.

\section{Global existence}\label{G}

Recall that the time $T$ in Theorem \ref{L:Thm1} only depends on the initial data norm $N_0$. But in view of \eqref{M:190} and \eqref{M:160}, $N_0$ is controlled by the energy:
$$
  N_0 \lesssim \mathcal E(0).
$$
Since the energy is conserved, this means that the time $T$ will stay fixed if we iterate the local result from Theorem \ref{L:Thm1}. So we extend the solution successively to intervals $[(n-1)T,nT]$ for $n=1,2,\dots$, and similarly in the negative time direction. At each step however, we have to make a gauge transformation as in Remark \ref{A:Rem1} in order to satisfy the hypotheses on the initial gauge in Theorem \ref{L:Thm1}. After applying the local existence theorem we reverse the gauge transformation, so that the local solutions fit together continuously. 

To be precise, at the start of the $n$-th iteration step we construct $\chi$ as in \eqref{M:170}, but now with the data taken at time $(n-1)T$, so $a_0$ and $\mathbf a$ in \eqref{M:170} come from the previous iteration step, hence we know from Theorem \ref{L:Thm1} that they belong to $\dot H^1 + H^{1-\delta}$, for $\delta > 0$ small. Therefore,
$$
  \square \chi = 0,
  \qquad
  \chi((n-1)T) \in \dot H^2 + \abs{\nabla}^{-1} H^{1-\delta},
  \qquad
  \partial_t \chi((n-1)T) \in \dot H^1 + H^{1-\delta},
$$
whence
$$
  \partial \chi \in C(\R;\dot H^1 + H^{1-\delta}),
$$
so the gauge transformation \eqref{A:90} preserves the regularity of $\phi$ and $A$, namely $\phi \in C(I;H^1)$ and $A \in C(I;\dot H^1 + H^{1-\delta})$, where $I \subset \R$ is the time interval. Moreover, the same is true of the inverse transformation, since it is obtained by replacing $\chi$ by $-\chi$. In particular, note that when we estimate the $H^1$ norm of $\phi'$ given by \eqref{A:90}, we need the fact that
$$
  \norm{\nabla \chi(t) \phi(t)}_{L^2}
  \lesssim \norm{\nabla\chi(t)}_{L^3+L^6} \norm{\phi(t)}_{H^1}
  \lesssim
  \norm{\nabla \chi(t)}_{H^{1-\delta}+\dot H^1} \norm{\phi(t)}_{\dot H^1}.
$$
Note also that the Lorenz gauge condition is preserved by \eqref{A:90} and its inverse, since $\square \chi = 0$.

This completes the proof of Theorem \ref{M:Thm} up to uniqueness, which we prove in the next section.

\section{Unconditional uniqueness}\label{U}

It suffices to prove the uniqueness on time intervals $[0,T]$ for small $T > 0$. With the same assumptions on the initial data as in Theorem \ref{M:Thm}, suppose that
$$
  \phi \in C\bigl([0,T];H^1\bigr) \cap C^1\bigl([0,T];L^2\bigr),
  \qquad
  \mathbf E, \mathbf B \in C\bigl([0,T];L^2\bigr)
$$
satisfy the M-K-G system \eqref{A:2}--\eqref{A:8} with initial condition \eqref{M:210}, relative to a real-valued 4-potential $A$ such that
$$
  A,\partial_t A \in C\bigl([0,T];\mathcal D'(\R^3)\bigr),
  \qquad
  \partial^\mu A_\mu = 0,
  \qquad
  (A,\partial_t A)\init=(a,\dot a),
$$
and such that
$$
  \mathcal E(t) = \mathcal E(0) \qquad \text{for all $t$}.
$$

We first make an observation concerning the regularity of $A$. By \eqref{A:104},
\begin{equation}\label{U:100}
  \square A = - J,
  \qquad
  (A,\partial_t A) \init = (a,\dot a) \in \dot H^1 \times L^2.
\end{equation}
But $\mathcal E(t) = \mathcal E(0) < \infty$ implies $D^{(A)}\phi \in L_t^\infty L_x^2$, hence $J \in L_t^\infty L_x^{\frac32}$, which guarantees uniqueness of the solution of \eqref{U:100} in the class $A,\partial_t A \in C\bigl([0,T];\mathcal D'(\R^3)\bigr)$ . Splitting into homogeneous and inhomogeneous parts, $A = A^{(0)} + A^{\text{inh.}}$, it then follows from the energy inequality for the wave equation, and Sobolev embedding, that \begin{align*}
  A^{(0)} &\in C([0,T];\dot H^1),&
  \partial_t A^{(0)} &\in C([0,T];L^2),
  \\
  A^{\text{inh.}} &\in C([0,T];H^{\frac12}),&
  \partial_t A^{\text{inh.}} &\in C([0,T];H^{-\frac12}).
\end{align*}

Now we split $\phi = \phi_+ + \phi_-$ and $A = A_+ + A_-$ using the definition \eqref{N:200}. Clearly, $\phi_+$ and $\phi_-$ belong to $C([0,T];H^1)$, whereas $A_+$ and $A_-$ are well-defined in $C([0,T];\dot H^1 + H^{\frac12} + \abs{\nabla}^{-1} H^{-\frac12})$, hence in $C([0,T];L^6 + H^{\frac12})$, since $\dot H^1 \hookrightarrow L^6$ and $\abs{\nabla}^{-1} H^{-\frac12} \hookrightarrow L^6 + H^{\frac12}$ (cf.\ the remarks at the end of section \ref{A}).

Now it follows that $(\phi_+,\phi_-,A_+,A_-)$ satisfies \eqref{N:205}, or equivalently \eqref{L:100}, on $S_T = (0,T) \times \R^3$, with initial data \eqref{L:120}. We claim that
\begin{align}
  \label{U:110}
  \bignorm{\phi_\pm^{(0)}}_{X_\pm^{1,1-\varepsilon}(S_T)}
  +
  \bignorm{\abs{\nabla} A_\pm^{(0)}}_{X_\pm^{0,1-\varepsilon}(S_T)}
  &\le C,
  \\
  \label{U:120}
  \norm{\phi_\pm^{\text{inh.}}}_{X_\pm^{1-\varepsilon,1-\varepsilon}(S_T)}
  +
  \norm{A_\pm^{\text{inh.}}}_{X_\pm^{1-\varepsilon,1-\varepsilon}(S_T)}
  &\le C,
\end{align}
for sufficiently small $\varepsilon > 0$, where $C$ is a constant that does not depend on $T$ (for $T$ small). Note that \eqref{U:110} follows from ~\eqref{L:140}; \eqref{U:120} is proved in subsections \ref{U:155}--\ref{U:218}.

Next, if $(\widetilde \phi, \widetilde A)$ is another solution with the same initial data (so the homogeneous parts are the same for the two solutions), and in the same regularity class, we introduce the quantity
\begin{equation}\label{U:142}
  \Delta(T) = \sum_{\pm} \left( \norm{\phi_\pm^{\text{inh.}} - \widetilde\phi_\pm^{\text{inh.}}}_{X_\pm^{1-\varepsilon,\frac12+\varepsilon}(S_T)}
  +
  \norm{A_\pm^{\text{inh.}} - \widetilde A_\pm^{\text{inh.}}}_{X_\pm^{1-\delta,1-\delta}(S_T)} \right).
\end{equation}
We claim that for $0 < \varepsilon \ll \delta \ll 1$,
\begin{equation}\label{U:144}
  \Delta(T)
  \le CT^\varepsilon \Delta(T),
\end{equation}
where $C$ is independent of $T > 0$ small. Granting this claim (see subsection \ref{U:300} for its proof), then for $T > 0$ small enough we get $\Delta(T) \le \frac12 \Delta(T)$, hence $\Delta(T) = 0$, and this proves the uniqueness.

To prove \eqref{U:120}, we start from the known facts $\phi_\pm \in L_t^\infty H^1$ and $D^{(A)}\phi \in L_t^\infty L_x^2$, and then we use the structure of the equations \eqref{L:100} (or equivalently \eqref{N:205}) to successively improve the regularity. To this end, we need the following:

\begin{lemma}\label{U:Lemma} Suppose $2 < q \le \infty$ and $2 \le r < \infty$ satisfy $\frac12 \le \frac{1}{q} + \frac{1}{r} \le 1$. Then
$$
  \norm{u}_{L_t^q L_x^r}
  \lesssim
  \bignorm{\abs{\nabla}^{1-\frac{2}{r}}u}_{H^{0,1 - (\frac{1}{q} + \frac{1}{r}) + \gamma}}
$$
holds for any $\gamma > 0$.
\end{lemma}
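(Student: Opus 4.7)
The plan is to prove the lemma by complex interpolation between the sharp wave Strichartz estimate in $\R^{1+3}$, transferred to $H^{s,b}$ spaces, and the trivial identification $L^2_{t,x} = H^{0,0}$. Note that the hypotheses $q > 2$ and $r \ge 2$ on top of $\frac{1}{2} \le \frac{1}{q} + \frac{1}{r} \le 1$ force $\frac{1}{q} + \frac{1}{r} < 1$ strictly, so the number $\mu := 2(1 - \tfrac{1}{q} - \tfrac{1}{r})$ lies in $(0, 1]$ and will serve as the interpolation parameter.

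First I would establish that for every Strichartz-admissible pair $(q^*, r^*)$ with $\frac{1}{q^*} + \frac{1}{r^*} = \frac{1}{2}$, $2 < q^* \le \infty$ and $2 \le r^* < \infty$,
$$
  \norm{u}_{L^{q^*}_t L^{r^*}_x}
  \lesssim
  \bignorm{\abs{\nabla}^{2/q^*} u}_{H^{0,\frac{1}{2}+\gamma}}
  \qquad (\gamma > 0).
$$
This follows from the classical Strichartz bound $\bignorm{e^{\pm it\abs{\nabla}} f}_{L^{q^*}_t L^{r^*}_x} \lesssim \bignorm{\abs{\nabla}^{2/q^*} f}_{L^2}$ (which degenerates to the energy identity at $q^* = \infty$, $r^* = 2$) via the standard transfer argument: split $u = u_+ + u_-$ according to the sign of $\tau$, write each half-wave part as $u_\pm(t,x) = \int_\R e^{it\sigma}\bigl(e^{\pm it\abs{\nabla}} F_\sigma\bigr)(x)\,d\sigma$ with $\widehat{F_\sigma}(\xi) = \widehat{u}_\pm(\sigma \pm \abs{\xi}, \xi)$, then apply Minkowski in $\sigma$, Strichartz to each $F_\sigma$, and Cauchy--Schwarz in $\sigma$ against the weight $\angles{\sigma}^{-(1/2+\gamma)}$; see \cite{Kenig:1994, Selberg:2007d}.

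Next I would choose $(q^*, r^*)$ on the Strichartz line by requiring $\frac{1}{q} = \frac{1-\mu}{2} + \frac{\mu}{q^*}$ and $\frac{1}{r} = \frac{1-\mu}{2} + \frac{\mu}{r^*}$; this system is consistent, as adding the two equations and using $\frac{1}{q^*} + \frac{1}{r^*} = \frac{1}{2}$ recovers the definition of $\mu$. Complex interpolation of Bochner spaces yields $(L^2_{t,x}, L^{q^*}_t L^{r^*}_x)_\mu = L^q_t L^r_x$. On the domain side, $\bignorm{\abs{\nabla}^\sigma u}_{H^{0,b}}$ is precisely the weighted $L^2$-norm $\bignorm{\abs{\xi}^\sigma \angles{\abs{\tau} - \abs{\xi}}^b \widehat u}_{L^2_{\tau,\xi}}$, and complex interpolation of weighted $L^2$-spaces produces the geometric mean of the weights; thus the interpolate between $H^{0,0}$ and $\abs{\nabla}^{-2/q^*} H^{0,\frac{1}{2}+\gamma}$ is $\abs{\nabla}^{-2\mu/q^*} H^{0,\mu(\frac{1}{2}+\gamma)}$. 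Applying the interpolation to the identity operator, which is bounded between the two endpoint pairs by the trivial inclusion at $\mu = 0$ and by the Strichartz bound at $\mu = 1$, yields
$$
  \norm{u}_{L^q_t L^r_x}
  \lesssim
  \bignorm{\abs{\nabla}^{2\mu/q^*} u}_{H^{0, \mu(\frac{1}{2}+\gamma)}}.
$$

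A short algebraic computation from the defining relations gives $\frac{2\mu}{q^*} = 1 - \frac{2}{r}$ and $\mu(\frac{1}{2} + \gamma) = 1 - (\frac{1}{q} + \frac{1}{r}) + \mu\gamma$, where $\mu\gamma$ is free to be any positive number by the choice of $\gamma$; the interpolated bound therefore coincides with the claim of the lemma. The main technical point is the simultaneous interpolation of the $\abs{\nabla}^\sigma$ operator together with the modulation weight $\angles{\abs{\tau}-\abs{\xi}}^b$, but this is handled in textbook fashion by Stein's interpolation theorem applied to the analytic family of Fourier multipliers $\abs{\xi}^{-2z/q^*}\angles{\abs{\tau}-\abs{\xi}}^{-z(\frac{1}{2}+\gamma)}$ on the strip $\{0 \le \re z \le 1\}$, with the two endpoint estimates furnishing the required boundary bounds.
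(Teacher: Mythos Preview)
Your proof is correct and follows essentially the same route as the paper's: both arguments pick a sharp-admissible pair $(q^*,r^*)$ on the line $\tfrac{1}{q^*}+\tfrac{1}{r^*}=\tfrac12$, invoke the transferred Strichartz estimate $\norm{u}_{L^{q^*}_t L^{r^*}_x}\lesssim \bignorm{\abs{\nabla}^{2/q^*}u}_{H^{0,\frac12+\gamma}}$, and then interpolate with the trivial identity $L^2_{t,x}=H^{0,0}$ using the parameter $\mu=\theta=2-2(\tfrac1q+\tfrac1r)$. The only cosmetic difference is notation (the paper writes $q^*=\tfrac{2}{1-\varepsilon}$, $r^*=\tfrac{2}{\varepsilon}$ and records the derivative exponent as $\theta(1-\varepsilon)=1-\tfrac{2}{r}$), and your explicit mention of Stein interpolation for the analytic family of multipliers is a slightly more detailed justification of the same interpolation step.
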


\begin{proof} Define $\theta = 2 - 2(\frac{1}{q} + \frac{1}{r})$ and $\varepsilon = 1 - \frac{\frac12 - \frac{1}{r}}{1 - (\frac{1}{q} + \frac{1}{r})}$. Then $\theta \in [0,1]$ and $\varepsilon \in (0,1]$, and we have
$$
  \frac{1}{q} = \frac{\theta(1-\varepsilon)}{2} + \frac{1-\theta}{2},
  \qquad
  \frac{1}{r} = \frac{\theta\varepsilon}{2} + \frac{1-\theta}{2}.
$$
By the Strichartz type estimates for the homogeneous 3d wave equation, and the transfer principle, we have (see, e.g., \cite{Selberg:2002b}), for any $\lambda > \frac12$,
$$
  \norm{u}_{L_t^{\frac{2}{1-\varepsilon}}L_x^{\frac{2}{\varepsilon}}}
  \lesssim
  \norm{\abs{\nabla}^{1-\varepsilon}u}_{H^{0,\lambda}}.
$$
Interpolating with $\norm{u}_{L_t^2 L_x^2}
= \norm{u}_{H^{0,0}}$, we get $\norm{u}_{L_t^q L_x^r}\lesssim \norm{\abs{\nabla}^{\theta(1-\varepsilon)}u}_{H^{0,\theta \lambda}}$,
and this gives the desired conclusion, since $\theta(1-\varepsilon) = 1-\frac{2}{r}$.
\end{proof}

Combining this lemma with the $L^p$ inequality for potentials (see \cite[Ch.~V, Thm.~1]{Stein:1970}), we get also, for $q,r$ as in the lemma,
$$
  \norm{u}_{L_t^q L_x^{3r}}
  \lesssim
  \bignorm{\abs{\nabla}u}_{H^{0,1 - (\frac{1}{q} + \frac{1}{r}) + \gamma}},
$$
and together with \eqref{U:110} and \eqref{L:222} this implies
\begin{equation}\label{U:150}
  A_\pm^{(0)} \in L_t^{\frac{2r}{r-2}} L_x^{3r}(S_T)
  \qquad \text{for all $2 \le r < \infty$},
\end{equation}
a fact we shall make use of below.

\subsection{First estimate for $A_\pm^{\text{inh.}}$}\label{U:155}

Applying \eqref{L:140} to the second equation in \eqref{N:205}, where $\mathcal N(A,\phi) = - \im \bigl( \phi \overline{D^{(A)}\phi} \bigr)$ (by \eqref{N:100}), we find that
\begin{equation}\label{U:160}
  \norm{A_\pm^{\text{inh.}}}_{X_\pm^{s,b}(S_T)}
  \lesssim
  \bignorm{\phi \overline{D^{(A)}\phi} }_{X_\pm^{s-1,b-1}(S_T)}
\end{equation}
for any $s \in \R$ and $b > \frac12$, which are still to be determined. On the other hand, we observe that
\begin{equation}\label{U:170}
  \bignorm{\phi \overline{D^{(A)}\phi} }_{L_t^p L_x^{\frac32}(S_T)}
  \le
  \norm{\phi}_{L_t^\infty L_x^6(S_T)}
  \bignorm{D^{(A)}\phi}_{L_t^\infty L_x^2(S_T)} < \infty,
\end{equation}
for any $1 \le p \le \infty$ (recall that implicit constants may depend on $T$, which is fixed). Note that the right hand side is bounded since $\phi \in L_t^\infty H^1$ and $D^{(A)}\phi \in L_t^\infty L_x^2$.

But by Lemma \ref{U:Lemma},
\begin{equation}\label{U:175}
  \norm{u}_{L_t^{\frac{2}{1-\gamma}} L_x^3}
  \lesssim
  \norm{u}_{H^{\frac13,\frac16 + \gamma}}
  \qquad (0 < \gamma \le 1).
\end{equation}
Therefore, by duality and \eqref{L:222},
\begin{equation}\label{U:180}
  \bignorm{\phi \overline{D^{(A)}\phi} }_{X_\pm^{s-1,b-1}(S_T)}
  \lesssim
  \bignorm{\phi \overline{D^{(A)}\phi} }_{L_t^{\frac{2}{1+\gamma}}L_x^{\frac32}(S_T)}
\end{equation}
holds with $s=\frac23$ and $b=\frac56 - \gamma$, for any $\gamma > 0$. From \eqref{U:160}--\eqref{U:180} we conclude:
\begin{equation}\label{U:190}
  A_\pm^{\text{inh.}} \in X_\pm^{\frac23,\frac56-\gamma}(S_T)
  \qquad (\forall \gamma > 0).
\end{equation}

\subsection{First estimate for $\phi_\pm^{\text{inh.}}$} We claim that
\begin{equation}\label{U:200}
  \phi_\pm^{\text{inh.}} \in X_\pm^{\frac23,\frac56-\gamma}(S_T)
  \qquad (\forall \gamma > 0).
\end{equation}
Applying \eqref{L:140} to the first equation in \eqref{N:205}, where $\mathcal M(A,\phi)$ is defined as in \eqref{N:100}, we reduce to proving that $A\partial\phi$ and $A^2\phi$ belong to $X_\pm^{-\frac13,-\frac16-\gamma}(S_T)$. But by the dual of \eqref{U:175}, it suffices to prove that they belong to $L_t^{\frac{2}{1+\gamma}} L_x^\frac32(S_T)$.

By Lemma \ref{U:Lemma} and \eqref{U:190}, $A_\pm^{\text{inh.}} \in L_t^3 L_x^6(S_T)$, and $A_\pm^{(0)} \in L_t^\infty L_x^6(S_T)$ by \eqref{U:150}, so
$$
  \norm{A\partial\phi}_{L_t^{\frac{2}{1+\gamma}} L_x^{\frac32}(S_T)}
  \lesssim
  \norm{A}_{L_t^3 L_x^6(S_T)}
  \norm{\partial\phi}_{L_t^\infty L_x^2(S_T)}
  < \infty.
$$

For $A^2\phi$, it suffices to estimate separately $(A^{(0)})^2 \phi$ and $(A^{\text{inh.}})^2 \phi$. For the former we write
$$
  \bignorm{(A^{(0)})^2\phi}_{L_t^{\frac{2}{1+\gamma}} L_x^{\frac32}(S_T)}
  \lesssim
  \bignorm{A^{(0)}}_{L_t^\infty L_x^6(S_T)}^2
  \norm{\phi}_{L_t^\infty L_x^3(S_T)}
  < \infty.
$$
On the other hand, by Lemma \ref{U:Lemma} and \eqref{U:190}, we also have $A_\pm^{\text{inh.}} \in L_t^4 L_x^4(S_T)$, hence
$$
  \norm{(A^{\text{inh.}})^2\phi}_{L_t^{\frac{2}{1+\gamma}} L_x^{\frac32}(S_T)}
  \lesssim
  \bignorm{A^{\text{inh.}}}_{L_t^4 L_x^4(S_T)}^2
  \norm{\phi}_{L_t^\infty L_x^6(S_T)}
  < \infty.
$$
and this completes the proof of \eqref{U:200}.

\subsection{Inductive estimates}\label{U:218} We claim that, for $m=1,2,\dots$,
\begin{equation}\label{U:220}
  A_\pm^{\text{inh.}}, \phi_\pm^{\text{inh.}} \in X_\pm^{s(m),b(m)-\gamma}(S_T) \qquad (\forall \gamma > 0),
\end{equation}
where
$$
  s(m) = \frac{2^m}{2^m+1},
  \qquad
  b(m) = \frac{2^{m+1}+1}{2^{m+1}+2}.
$$
For $m=1$, \eqref{U:220} holds by \eqref{U:190} and \eqref{U:200}.

We shall prove that if \eqref{U:220} holds for some $m \ge 1$, then it holds for $m+1$ also.

Interpolating \eqref{U:220} with $\phi_\pm^{\text{inh.}} \in X_\pm^{1,0}(S_T) = L_t^2 H^1(S_T)$, we get
$$
  \phi_\pm^{\text{inh.}} \in X_\pm^{\theta + (1-\theta)s(m),(1-\theta)b(m)-\varepsilon}(S_T)
$$
for all $0 \le \theta \le 1$ and $\varepsilon > 0$. Take $\theta = \frac{2^m}{2^{m+1}+1}$ to obtain
\begin{equation}\label{U:240}
  \phi_\pm^{\text{inh.}} \in X_\pm^{s(m+1),\frac12-\varepsilon}(S_T).
\end{equation}
But by Lemma \ref{U:Lemma},
$$
  \norm{u}_{L_t^{\frac{2}{s(m+1)+4\varepsilon}} L_x^{\frac{2}{1-s(m+1)}}}
  \lesssim
  \norm{u}_{H^{s(m+1),\frac12 - \varepsilon}}
$$
for all sufficiently small $\varepsilon > 0$, hence
\begin{equation}\label{U:250}
  \phi_\pm^{\text{inh.}} \in L_t^{\frac{2}{s(m+1)+4\varepsilon}} L_x^{\frac{2}{1-s(m+1)}}(S_T).
\end{equation}
Of course, \eqref{U:240} also holds for $\phi_\pm^{(0)}$, hence so does \eqref{U:250}. Therefore,
\begin{equation}\label{U:260}
  \bignorm{\phi \overline{D^{(A)}\phi} }_{L_t^p L_x^{\frac{2}{2-s(m+1)}}(S_T)}
  \le
  \norm{\phi}_{L_t^{\frac{2}{s(m+1)+4\varepsilon}} L_x^{\frac{2}{1-s(m+1)}}(S_T)}
  \bignorm{D^{(A)}\phi}_{L_t^\infty L_x^2(S_T)}
\end{equation}
is finite, for any $1 \le p \le \frac{2}{s(m+1)+4\varepsilon}$. By \eqref{U:160},
\begin{equation}\label{U:270}
  \norm{A_\pm^{\text{inh.}}}_{X_\pm^{s(m+1),b(m+1)-\gamma}(S_T)}
  \lesssim
  \bignorm{\phi \overline{D^{(A)}\phi} }_{X_\pm^{s(m+1)-1,b(m+1)-1-\gamma}(S_T)},
\end{equation}
and Lemma \ref{U:Lemma} implies (to check this, note that $1-b(m)=\frac{1-s(m)}{2}$ for all $m$)
$$
  \norm{u}_{L_t^{\frac{2}{1-\gamma}} L_x^{\frac{2}{s(m+1)}}}
  \lesssim
  \norm{u}_{X_\pm^{1-s(m+1),1-b(m+1)+\gamma}}
  \qquad (0 < \gamma \le 1).
$$
Then by duality,
\begin{equation}\label{U:280}
  \norm{u}_{X_\pm^{s(m+1)-1,b(m+1)-1-\gamma}(S_T)}
  \lesssim
  \norm{u}_{L_t^{\frac{2}{1+\gamma}} L_x^{\frac{2}{2-s(m+1)}}(S_T)},
\end{equation}
We conclude from \eqref{U:260}--\eqref{U:280} that
\begin{equation}\label{U:290}
  A_\pm^{\text{inh.}} \in X_\pm^{s(m+1),b(m+1)-\gamma}(S_T).
\end{equation}

It remains to prove $\phi_\pm^{\text{inh.}} \in X_\pm^{s(m+1),b(m+1)-\gamma}(S_T)$. Applying \eqref{L:140} to the first equation in \eqref{N:205}, and using again \eqref{U:280}, we reduce to proving that $A\partial\phi$ and $A^2\phi$ belong to $L_t^{\frac{2}{1+\gamma}} L_x^{\frac{2}{2-s(m+1)}}(S_T)$. 

By Lemma \ref{U:Lemma} and \eqref{U:290}, $A_\pm^{\text{inh.}} \in L_t^{\frac{2}{s(m+1)}} L_x^{\frac{2}{1-s(m+1)}}(S_T)$, so
$$
  \norm{A^{\text{inh.}}\partial\phi}_{L_t^{\frac{2}{1+\gamma}} L_x^{\frac{2}{2-s(m+1)}}(S_T)}
  \lesssim
  \norm{A^{\text{inh.}}}_{L_t^{\frac{2}{s(m+1)}} L_x^{\frac{2}{1-s(m+1)}}(S_T)}
  \norm{\partial\phi}_{L_t^\infty L_x^2(S_T)}
  < \infty.
$$
Similarly, by \eqref{U:150}, $A_\pm^{(0)} \in L_t^{\frac{2r}{r-2}} L_x^{3r}(S_T)$ with $r = \frac{2}{3(1-s(m+1))}$, so we have
$$
  \bignorm{A^{(0)}\partial\phi}_{L_t^{\frac{2}{1+\gamma}} L_x^{\frac{2}{2-s(m+1)}}(S_T)}
  \lesssim
  \bignorm{A^{(0)}}_{L_t^{\frac{2r}{r-2}} L_x^{3r}(S_T)}
  \norm{\partial\phi}_{L_t^\infty L_x^2(S_T)}
  < \infty.
$$

Now consider $A^2\phi$. First, replacing $A$ by $A^{(0)}$, we can write
\begin{equation}\label{U:296}
  \bignorm{(A^{(0)})^2\phi}_{L_t^{\frac{2}{1+\gamma}} L_x^{\frac{2}{2-s(m+1)}}(S_T)}
  \lesssim
  \bignorm{A^{(0)}}_{L_t^\infty L_x^6(S_T)}^2
  \norm{\phi}_{L_t^\infty L_x^p(S_T)}
  < \infty,
\end{equation}
where $\frac{1}{p} = \frac16 + \frac{1-s(m+1)}{2}$, hence $2 \le p \le 6$. Second, recalling that $A_\pm^{\text{inh.}} \in L_t^{\frac{2}{s(m+1)}} L_x^{\frac{2}{1-s(m+1)}}(S_T)$, and noting that by the embeddings $H^{\frac12} \hookrightarrow L_x^3$ and $X_\pm^{0,\frac12 + \varepsilon} \hookrightarrow L_t^\infty L_x^2$ we also have $A_\pm^{\text{inh.}} \in L_t^\infty L_x^3(S_T)$, we can estimate
\begin{multline}\label{U:298}
  \norm{(A^{\text{inh.}})^2\phi}_{L_t^{\frac{2}{1+\gamma}} L_x^{\frac{2}{2-s(m+1)}}(S_T)}
  \\
  \lesssim
  \bignorm{A^{\text{inh.}}}_{L_t^{\frac{2}{s(m+1)}} L_x^{\frac{2}{1-s(m+1)}}(S_T)}
  \bignorm{A^{\text{inh.}}}_{L_t^\infty L_x^3(S_T)}
  \norm{\phi}_{L_t^\infty L_x^6(S_T)}
  < \infty.
\end{multline}
and this completes the proof of \eqref{U:220}.

\subsection{Proof of the difference estimate \eqref{U:144}}\label{U:300}

In this subsection, when we say that an estimate holds, we mean that it holds for $0 < \varepsilon \ll \delta \ll 1$. Subtracting the equations \eqref{L:100} for $(\phi,A)$ and $(\widetilde \phi, \widetilde A)$, and applying the linear estimate \eqref{L:140}, or rather the modification of it discussed in the paragraph following it, we see that
$$
  \Delta(T) 
  \le C_\varepsilon T^\varepsilon \bigl(\alpha(T)+\beta(T)\bigr),
$$
where
\begin{align*}
  \alpha(T) &= \norm{\mathfrak M(\phi_+,\phi_-,A_+,A_-) - \mathfrak M(\widetilde\phi_+,\widetilde\phi_-,\widetilde A_+,\widetilde A_-)}_{X_\pm^{-\varepsilon,-\frac12+2\varepsilon}(S_T)},
  \\
  \beta(T) &= \norm{\abs{\nabla}^{-1} \left( \mathfrak N(\phi_+,\phi_-,A_+,A_-) - \mathfrak N(\widetilde\phi_+,\widetilde\phi_-,\widetilde A_+,\widetilde A_-) \right)}_{X_\pm^{1-\delta,0}(S_T)}.
\end{align*}
Thus, it suffices to show that
$$
  \alpha(T), \beta(T) \le K \Delta(T),
$$
where $K$ is a polynomial expression in the norms appearing in the left hand sides of~\eqref{U:110} and~\eqref{U:120}, as well as the corresponding norms for the solution $(\widetilde \phi, \widetilde A)$. But since $\mathfrak M$ and $\mathfrak N$, defined as in \eqref{L:110}, are multilinear operators, it suffices to prove
\begin{align}
  \label{U:340}
  \norm{\mathfrak M(\phi_+,\phi_-,A_+,A_-)}_{X_\pm^{-\varepsilon,-\frac12+2\varepsilon}}
  &\lesssim \norm{A}\norm{\phi} + \norm{A}^2\norm{\phi},
  \\
  \label{U:350}
  \norm{\abs{\nabla}^{-1}
  \mathfrak N(\phi_+,\phi_-,A_+,A_-)}_{X_\pm^{1-\delta,0}}
  &\lesssim \norm{\phi}^2 + \norm{A}\norm{\phi}^2,
\end{align}
for all $\phi_+,\phi_-,A_+,A_- \in \mathcal S(\R^{1+3})$ (the analogous estimates restricted to $S_T$ then follow immediately), where we write
\begin{align*}
  \norm{\phi}
  &=
  \sum_\pm \norm{\phi_\pm}_{X_\pm^{1-\varepsilon,\frac12+\varepsilon}},
  \\
  \norm{A}
  &=
  \sum_\pm \min\left(\norm{\abs{\nabla}A_\pm}_{X_\pm^{0,1-\delta}},\norm{A_\pm}_{X_\pm^{1-\delta,1-\delta}}\right).
\end{align*}

To prove \eqref{U:340} and \eqref{U:350}, we proceed as in the proof of \eqref{L:200} and \eqref{L:210} (there is some headroom in the proof of the latter two). In the following subsections we consider one by one the bilinear and trilinear terms in $\mathfrak M$ and $\mathfrak N$.

\subsubsection{Bilinear terms in $\mathfrak M$} These have a null structure. Proceeding as in subsection \ref{L:224}, we then need to check the estimates
\begin{align}
  \label{U:400}
  I
  &\lesssim \norm{\abs{\nabla} u}_{X_{\pm_1}^{0,1-\delta}}
  \norm{v}_{X_{\pm_2}^{1-\varepsilon,\frac12+\varepsilon}},
  \\
  \label{U:410}
  I
  &\lesssim
  \norm{u}_{X_{\pm_1}^{1-\delta,1-\delta}}
  \norm{v}_{X_{\pm_2}^{1-\varepsilon,\frac12+\varepsilon}},
\end{align}
where $\widehat u, \widehat v \ge 0$ and
$$
  I
  =
  \norm{
  \iint
  \frac{\sigma(\eta,\xi-\eta)}{\angles{\xi}^\varepsilon\angles{\abs{\tau}-\abs{\xi}}^{\frac12-2\varepsilon}}
  \widehat u(\lambda,\eta)
  \widehat v(\tau-\lambda,\xi-\eta) \d\lambda \d\eta
  }_{L^2_{\tau,\xi}},
$$
with a symbol $\sigma$ satisfying the estimate in Lemma \ref{N:Lemma1}. Using Lemma \ref{L:Lemma} with $s=\frac12-\varepsilon-\delta$, we then get $I \lesssim I_1 + I_2 + I_3 + I_4$, where
\begin{align*}
  I_1
  &=
  \norm{
  \iint
  \angles{\xi}^{-\varepsilon} \widehat u(\lambda,\eta)
  \widehat v(\tau-\lambda,\xi-\eta)
  \d\lambda \d\eta
  }_{L^2_{\tau,\xi}},
  \\
  I_2
  &=
  \norm{
  \iint
  \frac{\widehat u(\lambda,\eta)
  \abs{\xi-\eta}\widehat v(\tau-\lambda,\xi-\eta) }
  {\angles{\xi}^\varepsilon\min(\angles{\eta},\angles{\xi-\eta})^{\frac12-2\varepsilon}}
  \d\lambda \d\eta
  }_{L^2_{\tau,\xi}},
  \\
  I_3
  &=
  \norm{
  \iint
  \frac{\angles{-\lambda\pm_1\abs{\eta}}^{\frac12}\widehat u(\lambda,\eta)
  \abs{\xi-\eta}\widehat v(\tau-\lambda,\xi-\eta) }
  {\angles{\xi}^\varepsilon\angles{\abs{\tau}-\abs{\xi}}^{\frac12-2\varepsilon}
  \min(\angles{\eta},\angles{\xi-\eta})^{\frac12}}
  \d\lambda \d\eta
  }_{L^2_{\tau,\xi}}
  \\
  I_4
  &=
  \norm{
  \iint
  \frac{\widehat u(\lambda,\eta)
  \angles{-(\tau-\lambda)\pm_2\abs{\xi-\eta}}^{\frac12}
  \abs{\xi-\eta}\widehat v(\tau-\lambda,\xi-\eta) }
  {\angles{\xi}^\varepsilon\angles{\abs{\tau}-\abs{\xi}}^{\frac12-2\varepsilon}
  \min(\angles{\eta},\angles{\xi-\eta})^{\frac12}}
  \d\lambda \d\eta
  }_{L^2_{\tau,\xi}}
\end{align*}
Using also \eqref{L:220}, we can thus reduce \eqref{U:410} to the estimates
\begin{equation}\label{U:420}
\left\{
\begin{aligned}
  \norm{uv}_{H^{-\varepsilon,0}}
  &\lesssim
  \norm{u}_{H^{1-\delta,1-\delta}}
  \norm{v}_{H^{1-\varepsilon,\frac12+\varepsilon}}
  \\
  \norm{uv}_{H^{-\varepsilon,0}}
  &\lesssim
  \norm{u}_{H^{\frac32-2\varepsilon-\delta,1-\delta}}
  \norm{v}_{H^{-\varepsilon,\frac12+\varepsilon}}
  \\
  \norm{uv}_{H^{-\varepsilon,0}}
  &\lesssim
  \norm{u}_{H^{1-\delta,1-\delta}}
  \norm{v}_{H^{\frac12-3\varepsilon,\frac12+\varepsilon}}
  \\
  \norm{uv}_{H^{-\varepsilon,0}}
  &\lesssim
  \norm{u}_{H^{\frac32-\delta,\frac12-\delta}}
  \norm{v}_{H^{-\varepsilon,\frac12+\varepsilon}}
  \\
  \norm{uv}_{H^{-\varepsilon,0}}
  &\lesssim
  \norm{u}_{H^{1-\delta,\frac12-\delta}}
  \norm{v}_{H^{\frac12-\varepsilon,\frac12+\varepsilon}}
  \\
  \norm{uv}_{H^{-\varepsilon,-\frac12+2\varepsilon}}
  &\lesssim
  \norm{u}_{H^{\frac32-\delta,1-\delta}}
  \norm{v}_{H^{-\varepsilon,0}}
  \\
  \norm{uv}_{H^{-\varepsilon,-\frac12+2\varepsilon}}
  &\lesssim
  \norm{u}_{H^{1-\delta,1-\delta}}
  \norm{v}_{H^{\frac12-\varepsilon,0}}.
\end{aligned}
\right.
\end{equation}
All these hold by Theorem \ref{L:Thm2}. This proves~\eqref{U:410}, and also~\eqref{U:400} in the case where $u$ has Fourier support in $\abs{\xi} > 1$; if, on the other hand, $\widehat u$ is supported in $\abs{\xi} \le 1$, then denoting the spatial frequency of $\widehat v$ by $\eta$, so that $\widehat{uv}$ has spatial frequency $\xi+\eta$, we have $\angles{\xi+\eta} \sim \angles{\eta}$, hence $$I \lesssim \norm{u\angles{\nabla}v}_{H^{-\varepsilon,0}} \sim \norm{u \angles{\nabla}^{1-\varepsilon} v}_{L^2} \le \norm{u}_{L^\infty} \norm{\angles{\nabla}^{1-\varepsilon} v}_{L^2},$$ so \eqref{L:318} yields~\eqref{U:400}.

\subsubsection{Trilinear term in $\mathfrak M$} To handle the term $A_\mu A^\mu \phi$, we need the estimates
\begin{align}
  \label{U:500}
  \norm{u^2 v}_{H^{-\varepsilon,0}}
  &\lesssim
  \norm{\abs{\nabla}u}_{H^{0,1-\delta}}^2
  \norm{v}_{H^{1-\varepsilon,\frac12+\varepsilon}}
  \\
  \label{U:510}
  \norm{u^2 v}_{H^{-\varepsilon,0}}
  &\lesssim
  \norm{u}_{H^{1-\delta,1-\delta}}^2
  \norm{v}_{H^{1-\varepsilon,\frac12+\varepsilon}}.
\end{align}
For \eqref{U:500} we use H\"older's inequality and the potential inequality $\norm{f}_{L^2_x} \lesssim \norm{\abs{\nabla}^\varepsilon f}_{L_x^p}$ where $\frac{1}{p} = \frac12 + \frac{\varepsilon}{3}$, obtaining
$$
  \norm{u^2 v}_{H^{-\varepsilon,0}}
  \lesssim \norm{u^2 v}_{L_t^2 L_x^p}
  \le
  \norm{u}_{L_t^\infty L_x^6}^2 \norm{v}_{L_t^2 L_x^r},
$$
where $\frac{1}{r} = \frac16 + \frac{\varepsilon}{3}$. Applying the potential inequality $\norm{f}_{L^r_x} \lesssim \norm{\abs{\nabla}^{1-\varepsilon} f}_{L_x^2}$ as well as the embeddings $\dot H^1 \hookrightarrow L^6$ and $H^{0,\frac12+\varepsilon} \hookrightarrow L_t^\infty L_x^2$, we get \eqref{U:500}. To prove \eqref{U:510}, we use Theorem \ref{L:Thm2} twice, obtaining
$$
  \norm{u^2 v}_{H^{-\varepsilon,0}}
  \lesssim
  \norm{u^2}_{H^{\frac12+\varepsilon,0}}
  \norm{v}_{H^{1-\varepsilon,\frac12+\varepsilon}}
  \lesssim
  \norm{u}_{H^{1-\delta,1-\delta}}^2
  \norm{v}_{H^{1-\varepsilon,\frac12+\varepsilon}}.
$$

\subsubsection{Bilinear terms in $\mathfrak N$}

For these we need
\begin{equation}\label{U:550}
  \norm{\abs{\nabla}^{-1}(uv)}_{H^{1-\delta,0}}
  \lesssim
  \norm{u}_{H^{1-\varepsilon,\frac12+\varepsilon}}
  \norm{v}_{H^{-\varepsilon,\frac12+\varepsilon}}.
\end{equation}

If $uv$ has spatial Fourier support in $\abs{\xi} \le 1$, then the left hand side is comparable to $\norm{\abs{\nabla}^{-1}(uv)}_{L^2} \sim\norm{\abs{\nabla}^{-1}(\angles{\nabla}^\varepsilon u \cdot \angles{\nabla}^{-\varepsilon} v)}_{L^2}$, which we dominate by
$$
  \norm{\angles{\nabla}^\varepsilon u \cdot \angles{\nabla}^{-\varepsilon} v}_{L_t^2 L_x^{\frac65}}
  \le
  \norm{\angles{\nabla}^\varepsilon u}_{L_t^2 L_x^3}
  \norm{\angles{\nabla}^{-\varepsilon} v}_{L_t^\infty L_x^{2}}
  \lesssim
  \norm{u}_{H^{\frac12+\varepsilon,0}}
  \norm{v}_{H^{-\varepsilon,\frac12+\varepsilon}}.
$$

If $\abs{\xi} > 1$ in the spatial Fourier support of $uv$, we can replace the left hand side of \eqref{U:550} by $\norm{uv}_{H^{-\delta,0}}$, and the desired estimate holds by Theorem \ref{L:Thm2}. It is at this point that we need the assumption $\varepsilon \ll \delta$.

\subsubsection{Trilinear terms in $\mathfrak N$}

These are schematically of the form $A\phi^2$. Splitting into the output regions $\abs{\xi} \le 1$ and $\abs{\xi} > 1$, we see that it suffices to prove
\begin{align}
  \label{U:630}
  \norm{u^2 v}_{L_t^2 L_x^{\frac65}}
  &\lesssim
  \norm{u}_{H^{1-\varepsilon,\frac12+\varepsilon}}^2
  \norm{\abs{\nabla} v}_{H^{0,1-\delta}},
  \\
  \label{U:640}
  \norm{u^2 v}_{L_t^2 L_x^{\frac65}}
  &\lesssim
  \norm{u}_{H^{1-\varepsilon,\frac12+\varepsilon}}^2
  \norm{v}_{H^{1-\delta,1-\delta}},
  \\
  \label{U:650}
  \norm{u^2 v}_{L^2}
  &\lesssim
  \norm{u}_{H^{1-\varepsilon,\frac12+\varepsilon}}^2
  \norm{\abs{\nabla} v}_{H^{0,1-\delta}},
  \\
  \label{U:660}
  \norm{u^2 v}_{H^{-\delta,0}}
  &\lesssim
  \norm{u}_{H^{1-\varepsilon,\frac12+\varepsilon}}^2
  \norm{v}_{H^{1-\delta,1-\delta}}.
\end{align}
The first two follow from H\"older's inequality and Sobolev embedding (much as in the proof of \eqref{L:330}). Next, write
$$
  \norm{u^2 v}_{L^2}
  \le
  \norm{u^2}_{L_t^2 L_x^3}
  \norm{v}_{L_t^\infty L_x^6}
  \lesssim
  \norm{u^2}_{H^{\frac12,0}}
  \norm{\abs{\nabla} v}_{H^{0,1-\delta}}.
$$
Since $\norm{u^2}_{H^{\frac12,0}} \lesssim \norm{u}_{H^{1-\varepsilon,\frac12+\varepsilon}}^2$ by Theorem \ref{L:Thm2}, this proves \eqref{U:650}. Finally, to prove~\eqref{U:660} we apply Theorem \ref{L:Thm2} twice, obtaining
$$
  \norm{u^2 v}_{H^{-\delta,0}}
  \lesssim
  \norm{u^2}_{H^{\frac12+\varepsilon,0}}
  \norm{v}_{H^{1-\delta,1-\delta}}
  \lesssim
  \norm{u}_{H^{1-\varepsilon,\frac12+\varepsilon}}^2
  \norm{v}_{H^{1-\delta,1-\delta}}.
$$

This concludes the proof of uniqueness.

\bibliographystyle{amsplain} 
\bibliography{MKGbibliography}

\end{document}